\documentclass[12pt,a4paper]{article}
\usepackage{amsmath,amssymb,amsthm, bm}
\usepackage{enumerate}
\usepackage[left=2.4cm,top=3.5cm,right=2.4cm,bottom=3.5cm]{geometry}
\usepackage{color}
\newtheorem{theorem}{Theorem}[section]
\newtheorem{corollary}[theorem]{Corollary}

\newtheorem{remark}[theorem]{Remark}

\newtheorem{example}[theorem]{Example}
\newtheorem{definition}[theorem]{Definition}
\numberwithin{equation}{section}
\raggedbottom

\usepackage{enumerate}
\newcommand{\Cm}{{\ensuremath{\mathbb{C}^{m\times n}}}}
\newcommand{\Cn}{{\ensuremath{\mathbb{C}^{n\times m}}}}
\newcommand{\Cnn}{{\ensuremath{\mathbb{C}^{n\times n}}}}

\newcommand{\Ra}{{\ensuremath{\cal R}}}
\newcommand{\Nu}{{\ensuremath{\cal N}}}

\newcommand{\core}{\mathrel{\text{\textcircled{$\#$}}}}
\newcommand{\ra}{{\ensuremath{\rm rk}}}
\newcommand{\ind}{{\ensuremath{\rm Ind}}}

\newcommand{\odagger}{\mathrel{\text{\textcircled{$\dagger$}}}}
\newcommand{\weak}{\mathrel{\text{\textcircled{w}}}}

\newcommand{\rank}{{\ensuremath{\rm rk}}}

\begin{document}
\author{D.E. Ferreyra\thanks{Universidad Nacional de R\'io Cuarto, CONICET, FCEFQyN, RN 36 KM 601, 5800 R\'io Cuarto, C\'ordoba, Argentina. E-mail: \texttt{deferreyra@exa.unrc.edu.ar}} ~and
Saroj B. Malik\thanks{School of Liberal Studies, Ambedkar University, Kashmere Gate, Delhi, India. E-mail:  \texttt{saroj.malik@gmail.com}}}

\title{The $m$-weak core inverse}
\date{}
\maketitle

\begin{abstract}
Since the day the core inverse has been known in a paper of Bakasarly and Trenkler, it has been widely researched. 
So far, there are four generalizations of this inverse for the case of matrices of an arbitrary index, namely, the BT inverse, the DMP  inverse, the core-EP inverse  and the WC inverse. In this paper we introduce a new type of generalized inverse for a matrix of arbitrary index to be  called $m$-weak core inverse which generalizes the core-EP inverse,  the WC inverse, and therefore the core inverse. We study several properties and characterizations of the $m$-weak core inverse  by using matrix decompositions.
\end{abstract}

AMS Classification: 15A09, 15A24.

\textrm{Keywords}: Drazin inverse, $m$-weak core inverse, $m$-weak group inverse, DMP inverse, core-EP inverse, WC inverse, core inverse.

\section{Introduction}

Generalized matrix inverses  are important objects in matrix theory. They were defined to find solutions of systems of algebraic equations to begin with and in due course of time they  have proved to be useful tools not only in solving matrix equations but also in various applications such as networks, coding theory, environmental chemistry, electrical engineering and statistics to cite a few \cite{BeGr}.

The earliest generalized inverses known are the Moore-Penrose inverse and the Drazin inverse (or more specifically the group inverse). Using the Moore-Penrose inverse and the group inverse, Baksalary and Trenkler  \cite{BaTr} introduce the core inverse of a square matrix of index at most one. A large number of research papers can be found in the literature on this generalized inverse for the last decade \cite{FeMa, FeMa2, FeMa3, FeLeTh2, FeLeTh3} and several others to name. The core inverse has been generalized further to give rise to several new generalized inverses namely, the BT inverse \cite{BaTr2}, the DMP  inverse \cite{MaTh}, the core-EP inverse \cite{MoPr} and the WC inverse \cite{FeLePrTh}.  The recent generalized inverses show new research trends that cover both theoretical and applied aspects. Some of the interesting applications of these generalized inverses can be found, for example, in \cite{FeMa4, MoStZh, StSeBeSaMoSeLa, ZhChStKaMa}. Also, they play an important role in the study of matrix partial orders, as we can see in \cite{FeMa, FeMa2, MiBhMa, ZhPa}.

Our main objective in this paper is to introduce a new type of generalized inverse  of a matrix of an arbitrary index called $m$-weak core inverse which  unifies two  generalizations of the inverse core namely, the core-EP inverse and the WC inverse.  We study several properties and characterizations of the $m$-weak core and present some of its main representations  and canonical forms.

\section{Notations and terminology}

We first give the notations to be used in this paper. We also record the  definitions of some known generalized inverses that are useful in this work.

We denote the set of all $m \times n$ complex matrices by $\Cm$. For $A\in\Cm$, the symbols $A^{\ast},$ $A^{-1}$, $\ra(A)$, $\Nu(A)$, and  $\Ra(A)$
will stand for the conjugate transpose,
the inverse (when $m=n$), the rank, the null space, and the column space of $A$, respectively. Moreover, $I_n$ will refer to the $n \times n$ identity matrix.

A matrix $X \in \Cn$ that
satisfies the equality $AXA =A$ is called an {\it inner inverse}, and the set of all inner inverses of $A$ is denoted by $A\{1\}$. While a matrix $X \in \Cn$ that satisfies the equality $XAX =X$ is called an {\it outer inverse} of $A$.

For $A\in\Cm$, the {\it Moore-Penrose inverse} of $A$ is the unique matrix $A^\dag \in \Cn$ satisfying the following four equations \cite{BeGr}
$$ AA^\dag A=A, \quad A^\dag A A^\dag=A^\dag,
\quad (A A^\dag)^*=A A^\dag, \quad   \text{and} \quad (A^\dag A)^*=A^\dag A.$$
The Moore-Penrose inverse can be used to represent  the orthogonal projectors $P_A:=AA^{\dag}$ and $Q_A:=A^{\dag}A$ onto $\Ra(A)$ and $\Ra(A^*)$, respectively.
When $\Ra(A)=\Ra(A^*)$ (or equivalently $P_A=Q_A$) we say that $A$ is an EP matrix.

Let $A\in \Cnn$. The smallest nonnegative integer $k$ for which ${\cal R}(A^k) = {\cal R}(A^{k+1})$ is called the {\it index} of $A$ and is denoted by $\text{Ind}(A)$. It is well-known that if $A$ is EP then $A$ is of index at most one. 

We recall that the {\it Drazin inverse} of $A\in \Cnn$ is the unique matrix
$A^d\in \Cnn$ satisfying the following three equations \cite{BeGr}
$$A^dAA^d=A^d, \quad AA^d=A^dA, \quad   \text{and} \quad A^d  A^{k+1}=A^k, $$
where $k=\ind(A)$.
If $k\le 1$, the  Drazin inverse of $A$ is called the  {\it group inverse} of $A$ and denoted by $A^\#$.

In 2010, Baksalary and Trenkler \cite{BaTr} introduced the core inverse of a square matrix.
They proved that  the core inverse of a matrix $A \in \Cnn$ is  the unique matrix $A^{\core} \in \Cnn$ satisfying the conditions
\begin{equation}\label{core inverse}
AA^{\core}=P_A \quad \text{and} \quad \Ra(A^{\core})\subseteq \Ra(A).
\end{equation}
It is well-known that the core inverse of $A$ exists if and only if $\ind(A) \le 1$. In this case, the unique solution satisfying \eqref{core inverse} is $A^{\core}=A^\# A A^\dag$.

In 2014, two of the most known generalizations of the core inverse were introduced for
$n \times n$ complex matrices of arbitrary index $k$, namely, the DMP inverse and the core-EP inverse.
More precisely, Malik and Thome \cite{MaTh} introduced the {\it DMP inverse} of $A \in \Cnn$ as the unique matrix $A^{d,\dag} \in \Cnn$ satisfying
\begin{equation}\label{def dmp inverse}
A^{d,\dag}AA^{d,\dag}=A^{d,\dag},\qquad
A^{d,\dag}A=A^d A, \quad \text{and} \quad A^kA^{d,\dag}= A^k A^{\dag}.
\end{equation} Moreover, it was
proved that $A^{d,\dag}=A^dAA^\dag$.

Manjunatha Prasad and Mohana \cite{MoPr} defined the {\it core-EP inverse} of $A \in \Cnn$ as the  unique matrix $A^{\odagger} \in \Cnn$ satisfying
\begin{equation}\label{def core EP}
A^{\odagger}AA^{\odagger}=A^{\odagger}\qquad \text{and} \qquad
\Ra(A^{\odagger})=\Ra((A^{\odagger})^*)=\Ra(A^k).
\end{equation}
As was proved in \cite[Corollary 3.8]{FeLeTh3}, the core-EP inverse can be represented by the formulae
\begin{equation}\label{core-EP inverse}
A^{\odagger}=A^d P_{A^k}.
\end{equation}

By using the core-EP inverse of a matrix, Wang and Chen \cite{WaCh} introduced the {\it WG inverse}  of a matrix $A \in \Cnn$  as the unique matrix $A^{\weak}\in \Cnn$ satisfying
\begin{equation} \label{def wg inverse}
A(A^{\weak})^2=A^{\weak} \quad  \text{and} \quad AA^{\weak}=A^{\odagger}A.
\end{equation}
The unique solution of the above system is given by
\begin{equation}\label{WG inverse}
A^{\weak}=(A^{\odagger})^2 A.
\end{equation}
If $\ind(A)\le 1$, the WG inverse and the group inverse coincide.
In \cite{FeThOr} the authors extended the notion of the WG inverse to rectangular matrices.

In \cite{ZhChZh}, Zhou et al. proposed the $m$-weak group in ring with involution and gave some characterizations of it. For $A\in\Cnn$  a matrix of index $k$, the $m$-weak group inverse is the unique matrix $A^{\weak_m}=X\in \Cnn$ satisfying the equations
\begin{equation}
XA^{k+1}=A^k, \quad AX^2=X, \quad (A^*)^kA^{m+1}X=(A^*)^kA^m, \quad m\in \mathbb{N}.
\end{equation}

Recently, in \cite{JiZu} it was proved that the $m$-weak group inverse of $A$ can be characterized by the following matrix system involving the core-EP inverse
\begin{equation}\label{m-WG characterization}
AX^2=X, \quad AX=(A^{\odagger})^m A^m.
\end{equation}
Moreover, the authors proved that the $m$-weak group inverse can be expressed in term of the  core-EP inverse as
\begin{equation}\label{m-WG}
A^{\weak_m}=(A^{\odagger})^{m+1} A^m.
\end{equation}
\begin{remark}\label{rem 1}
From \eqref{WG inverse} and \eqref{m-WG} it is clear that if $m=1$, the $m$-weak group inverse reduces to the WG inverse, i.e., $A^{\weak_1}=A^{\weak}$.
When $m=2$, the $m$-weak group inverse coincides with the GG inverse studied recently by Ferreyra and Malik in \cite{FeMa}, that is, $A^{\weak_2}=(A^{\odagger})^3 A^2$. Moreover, if $m\ge k=\ind(A)$,  $A^{\weak_m}=A^d$. Therefore, the $m$-weak group inverse extends the notions of  WG inverse, GG inverse and Drazin inverse.
\end{remark}

Recently, Ferreyra et al. \cite{FeLePrTh}  introduced another generalization of the core inverse by using the WG inverse, namely the  {\it WC inverse} as the matrix
\begin{equation}\label{WC inverse}
A^{\weak,\dag}=A^{\weak} P_A.
\end{equation}

This paper is organized as follows. Section 3 comprises of some preliminaries. Section 4 introduces the $m$-weak core inverse and  some of its main properties by using the core-EP decomposition. In Section 5, we derive some more properties of this new generalized inverse. Section 6 is devoted to the study another canonical form of $m$-weak core inverses. Section 7 offers the relationship between the $m$-weak core inverse and other generalized inverses.

\section{Preliminaries}

As proved in \cite{Wang}, every matrix $A\in \Cnn$ of index $k$ can be written in its core-EP decomposition \begin{equation} \label{core EP decomposition}
A= U\left[\begin{array}{cc}
T & S\\
0 & N
\end{array}\right]U^*,
\end{equation}
where $U\in \Cnn$ is unitary, $T\in \mathbb{C}^{t\times t}$ is a nonsingular matrix with $t:=\text{rk}(T)=\text{rk}(A^k)$, and $N\in \mathbb{C}^{(n-t)\times (n-t)}$ is nilpotent of index $k$.

Notice that if $k=0$, $N$ and $S$ are absent in \eqref{core EP decomposition}, and $A = T$ (with $U=I_n$). Henceforth, we can assume $\text{Ind}(A)=k \geq 1$ when the core EP decomposition is applied.

If  $A\in \Cnn$ is written as \eqref{core EP decomposition}, the core-EP inverse of $A$ is
\begin{equation}\label{core EP representation}
A^{\odagger} = U\left[\begin{array}{cc}
T^{-1} & 0 \\
0 & 0
\end{array}\right]U^*.
\end{equation}

\begin{remark}\label{GM} If $A$ is of the form \eqref{core EP decomposition} then $k=1$ if and only if $N=0$.
\end{remark}

\begin{remark} \label{A^m decomposition}
Let $A\in \Cnn$ be  written as in (\ref{core EP decomposition}) and $\ell \in \mathbb{N}$. Then
\begin{equation} \label{core EP Am}
A^\ell = U\left[\begin{array}{cc}
T^\ell & \tilde{T}_{\ell}\\
0 & N^{\ell}
\end{array}\right]U^*, \quad \tilde{T}_{\ell}:=\sum\limits_{i=0}^{{\ell}-1} T^{i} S N^{{\ell}-1-i},
\end{equation}
 is the core-EP decomposition of $A^{\ell}$. In particular, by \cite[Theorem 3.9]{FeLeTh3} we know that the Moore-Penrose inverse of $A^{\ell}$ is given by
 \begin{equation} \label{MP of Am}
(A^{\ell})^\dagger = U\left[\begin{array}{cc}
(T^{\ell})^*\Delta_{\ell} & -(T^{\ell})^*\Delta_{\ell} \tilde{T}_{\ell} (N^{\ell})^\dagger\\
\Omega_{\ell}^*\Delta_{\ell} & (N^{\ell})^\dagger-\Omega_{\ell}^*\Delta_{\ell} \tilde{T}_{\ell} (N^{\ell})^\dagger
\end{array}\right]U^*,
\end{equation} where
 \begin{equation*}
\Delta_{\ell}:=(T^{\ell} (T^{\ell})^*+ \Omega_{\ell}\Omega_{\ell}^*)^{-1}\quad \text{and} \quad \Omega_{\ell}:=\tilde{T}_{\ell}(I_{n-t}-Q_{N^{\ell}}).
\end{equation*}
\end{remark}

From \eqref{core EP Am} and \eqref{MP of Am} we derive the following expression for the orthogonal projector

\begin{equation}\label{P_Am and Q_Am}
P_{A^\ell} = U\left[\begin{array}{cc}
I_t &  0 \\
0 & P_{N^\ell}
\end{array}\right]U^*, \quad \ell\in \mathbb{N}.
\end{equation}

Moreover, as proved in \cite{FeLeTh1}, for each integer $\ell \ge k$  we have
\begin{equation}\label{projector}
P_{A^\ell}
= U\left[\begin{array}{cc}
I_t & 0 \\
0 & 0
\end{array}\right]U^*.
\end{equation}

\section{The $m$-weak core inverse}

In this section we introduce a generalized inverse of $A \in \Cnn$ by using the $m$-weak group inverse $A^{\weak_m}$ of $A$ and the orthogonal projector onto $\Ra(A^m)$, for $m\in \mathbb{N}$.

\begin{definition}\label{def m-weak core}
Let  $A\in \Cnn$ and $m\in \mathbb{N}$. The matrix $A^{\core_m}\in \Cnn$ satisfying
\begin{equation}\label{m-weak core}
A^{{\core}_m}=A^{\weak_m} P_{A^m},
\end{equation}
is called  the $m$-weak core inverse of $A$.
\end{definition}

As the $m$-weak group inverse of a square matrix always exists and is unique, it is seen from \eqref{m-weak core} that the $m$-weak core inverse exists for every square matrix and is unique.

From  \eqref{core-EP inverse},  \eqref{WC inverse} and Remark \ref{rem 1} we can say that the  $m$-weak core inverse is a generalization  of the WC inverse and  the core-EP inverse.

\begin{remark}\label{rem 2}
\begin{enumerate}[(i)]
\item   If $m=1$, then the $m$-weak core inverse coincides with the WC inverse, that is, $A^{\core_1}=A^{\weak,\dag}$.
\item  If $m\ge k$, then the $m$-weak core inverse coincides with the core-EP inverse, that is, $A^{\core_m}=A^{\odagger}$.
\item  If $A^m$ is EP then $A^{\core_m}=A^{\weak_m}$. In fact, as $A^m$ is EP then  $P_{A^m}=Q_{A^m}$. Thus, $A^{\core_m}=A^{\weak_m} P_{A^m}=(A^{\odagger})^{m+1}A^m Q_{A^m}=(A^{\odagger})^{m+1}A^m=A^{\weak_m}$.
\end{enumerate}
\end{remark}
Notice that, in general, the reciprocal of Remark \ref{rem 1} (iii) is false.
\begin{example} Let
$A= \left[\begin{array}{rrrrr}
1 & 0 & 0 & 1 & 0 \\
0 &  0 & 1 & 1 & 0  \\
0 &  0 &  0 & 1 & 0\\
0 & 0 & 0 & 0 & -1 \\
0 & 0 & 0 & 0 & 0
\end{array}\right]$.
It is easy to check that $\text{Ind}(A)=4$. Consider $m=1$. Thus, the $m$-weak group inverse and the $m$-weak core inverse coincides, that is, \[A^{\weak_1}=A^{\core_1}= \left[\begin{array}{rrrrr}
1 & 0 & 0 & 1 & 0 \\
0 &  0 & 0 & 0 & 0  \\
0 &  0 &  0 & 0 & 0\\
0 & 0 & 0 & 0 & 0\\
0 & 0 & 0 & 0 & 0
\end{array}\right].\]
 However, clearly $A$ is not EP.
\end{example}
\begin{remark}\label{rem 3} If $\ind(A)\le 1$, it is well-known that $A^{\odagger}=A^{\core}$. So,  from Remark \ref{rem 1} (ii), the $m$-weak core inverse coincides with the core inverse for all $m\in \mathbb{N}$ provided $\ind(A)\le 1$.
\end{remark}
Next we give some basic properties   of the $m$-weak core inverse.

\begin{theorem} \label{properties 1}
Let  $A\in \Cnn$ and $m\in \mathbb{N}$.  Then
\begin{enumerate}[{\rm(a)}]
\item  $A^{\core_m}=(A^{\odagger})^{m+1}A^m P_{A^m}$.
\item  $A^{\core_m}=(A^{\weak})^m A^{m-1}P_{A^m}$.
\item  $A A^{\core_m}=(A^{\odagger})^m A^m P_{A^m}=A^{\weak_{m-1}}AP_{A^m}$.
\item  $A^{\core_m} A=(A^{\odagger})^{m+1}A^m P_{A^m}A$.
\item  $A^{\core_m} A^m=A^{\weak_m}A^m=(A^{\odagger})^{m+1}A^{2m}$.
\item  $A^{\core_m}=YAA^{\core_m}$, where $YAY=Y$ and $\Ra(Y)=\Ra(A^k)$.
\end{enumerate}
\end{theorem}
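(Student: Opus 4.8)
The plan is to derive (a) directly from the definition, then use it together with the representation $A^{\weak_m}=(A^{\odagger})^{m+1}A^m$ from \eqref{m-WG} to get (b)--(e), and to settle (f) by a short range argument. Two elementary facts will do most of the work. First, reading off the core-EP decomposition \eqref{core EP decomposition}, \eqref{core EP representation} and \eqref{projector} gives $AA^{\odagger}=P_{A^k}$ and $P_{A^k}A^{\odagger}=A^{\odagger}$, hence $A(A^{\odagger})^2=A^{\odagger}$ and, by iteration, $A(A^{\odagger})^{j+1}=(A^{\odagger})^j$ for every $j\ge 1$. Second, $P_{A^m}A^m=A^m$ since the columns of $A^m$ lie in $\Ra(A^m)$.

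With these in hand the routine parts go as follows. Part (a) is just \eqref{m-WG} substituted into \eqref{m-weak core}, and (d) follows from (a) on multiplying by $A$ on the right. For (e), $A^{\core_m}A^m=A^{\weak_m}P_{A^m}A^m=A^{\weak_m}A^m$ by the second fact, and then $A^{\weak_m}=(A^{\odagger})^{m+1}A^m$ finishes it. For (c), I would start from (a) and push one copy of $A$ through: $AA^{\core_m}=A(A^{\odagger})^{m+1}A^mP_{A^m}=(A^{\odagger})^mA^mP_{A^m}$ by the first fact, and then rewrite $(A^{\odagger})^mA^m=\big((A^{\odagger})^mA^{m-1}\big)A=A^{\weak_{m-1}}A$ using \eqref{m-WG}. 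For (b), I would first prove $(A^{\weak})^m=(A^{\odagger})^{m+1}A$ by induction on $m$: the base case is \eqref{WG inverse}, and the inductive step uses $A^{\weak}=(A^{\odagger})^2A$ together with $A(A^{\odagger})^2=A^{\odagger}$; then $(A^{\weak})^mA^{m-1}=(A^{\odagger})^{m+1}A^m=A^{\weak_m}$, and right multiplication by $P_{A^m}$ yields (b).

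The only part needing a genuine argument is (f). From $YAY=Y$ the product $YA$ is idempotent, and $Y=(YA)Y$ together with $\Ra(Y)=\Ra(A^k)$ forces $\Ra(YA)=\Ra(Y)=\Ra(A^k)$; thus $YA$ is a projector onto $\Ra(A^k)$, so it fixes every vector of $\Ra(A^k)$, and in particular $YA^{k+1}=(YA)A^k=A^k$. On the other hand, part (a) gives $\Ra(A^{\core_m})\subseteq\Ra((A^{\odagger})^{m+1})\subseteq\Ra(A^{\odagger})=\Ra(A^k)$, the last equality being the defining property \eqref{def core EP} of the core-EP inverse; hence $A^{\core_m}=A^kZ$ for some $Z\in\Cnn$. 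Combining, $YAA^{\core_m}=YA^{k+1}Z=A^kZ=A^{\core_m}$, which is (f). I expect (f) to be the main obstacle --- more precisely, recognizing that the two inputs $YA^{k+1}=A^k$ and $\Ra(A^{\core_m})\subseteq\Ra(A^k)$ are all that is required --- while (a)--(e) are bookkeeping around \eqref{m-WG} and the identity $A(A^{\odagger})^{j+1}=(A^{\odagger})^j$.
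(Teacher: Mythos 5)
Your proof is correct, and for parts (a)--(e) it follows essentially the same route as the paper: (a) is the substitution of $A^{\weak_m}=(A^{\odagger})^{m+1}A^m$ into the definition, (b) rests on the same induction giving $(A^{\weak})^m=(A^{\odagger})^{m+1}A$, and (d), (e) are the same one-line consequences. In (c) you compute $A(A^{\odagger})^{m+1}=(A^{\odagger})^m$ directly instead of quoting the characterization $AA^{\weak_m}=(A^{\odagger})^mA^m$ from \eqref{m-WG characterization}; the two are interchangeable. The only genuine divergence is in (f): the paper rewrites $A^{\core_m}=YAA^{\core_m}$ as the range condition $\Ra(A^{\core_m})\subseteq\Nu(I_n-YA)=\Ra(YA)=\Ra(A^k)$ and then invokes the later result $\Ra(A^{\core_m})=\Ra(A^k)$ (Theorem \ref{properties 2}~(h), a forward reference), whereas you get the needed inclusion $\Ra(A^{\core_m})\subseteq\Ra((A^{\odagger})^{m+1})\subseteq\Ra(A^{\odagger})=\Ra(A^k)$ directly from (a) and \eqref{def core EP}, and combine it with $YA^{k+1}=A^k$. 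Your version is self-contained and slightly cleaner for that reason, at no cost in generality; both arguments hinge on the same two facts, namely that $YA$ is an idempotent with range $\Ra(A^k)$ and that the columns of $A^{\core_m}$ lie in $\Ra(A^k)$.
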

\begin{proof}
(a) It directly follows from \eqref{m-WG} and \eqref{m-weak core}.\\
(b) From \eqref{WG inverse} we know that $A^{\weak}=(A^{\odagger})^2A$. Also, it is well-known that $A(A^{\odagger})^2=A^{\odagger}$. Thus, by applying induction on $m$, it is easy to see $(A^{\weak})^m=(A^{\odagger})^{m+1}A$. Now, the affirmation follows from (a). \\
(c) As $A^{{\core}_m}=A^{\weak_m} P_{A^m}$, the first equality in (c) follows  from \eqref{m-WG characterization}. The second equality is due to \eqref{m-WG}. \\
(d) It is a consequence  from (a).\\
(e) The first equality is an immediate consequence from Definition \ref{def m-weak core}. The second equality is due to \eqref{m-WG}.\\
(f)  Note that $A^{\core_m}=YAA^{\core_m}$ holds if and only if $\Ra(A^{\core_m})\subseteq \Nu(I_n-YA)=\Ra(YA)=\Ra(Y)=\Ra(A^k)$, because $Y$ is an outer inverse of $A$. Now, by Theorem \ref{properties 2} (h)    also we have $\Ra(A^{\core_m})=\Ra(A^k)$. Thus, the assertion follows.
\end{proof}

Now we identify a representation for the $m$-weak core inverse by using the core-EP decomposition.
\begin{theorem}\label{thm canonical m-weak core}
Let $A \in \Cnn$ be of the form (\ref{core EP decomposition}) and $m\in \mathbb{N}$. Then
\begin{equation}\label{canonical m-weak core}
A^{\core_m}=U\begin{bmatrix}
 T^{-1} & T^{-(m+1)}\tilde{T}_m P_{N^m} \\
  0 & 0
  \end{bmatrix}U^*.
\end{equation}
\end{theorem}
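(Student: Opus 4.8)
The plan is to feed the known block representations of the three ingredients directly into the formula for $A^{\core_m}$ and carry out the block multiplication. By Theorem~\ref{properties 1}(a) we have $A^{\core_m} = (A^{\odagger})^{m+1} A^m P_{A^m}$, so it suffices to assemble the $2\times 2$ block forms of $(A^{\odagger})^{m+1}$, $A^m$, and $P_{A^m}$ relative to the unitary $U$ from the core-EP decomposition~\eqref{core EP decomposition}, and multiply them.

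First I would record the three blocks. From \eqref{core EP representation}, $A^{\odagger} = U\begin{bmatrix} T^{-1} & 0 \\ 0 & 0\end{bmatrix}U^*$; since $U^*U = I_n$ and $T$ is nonsingular, a trivial induction gives $(A^{\odagger})^{m+1} = U\begin{bmatrix} T^{-(m+1)} & 0 \\ 0 & 0\end{bmatrix}U^*$ (alternatively one may invoke $A(A^{\odagger})^2=A^{\odagger}$ as in the proof of Theorem~\ref{properties 1}(b)). Next, Remark~\ref{A^m decomposition} gives $A^m = U\begin{bmatrix} T^m & \tilde{T}_m \\ 0 & N^m\end{bmatrix}U^*$ with $\tilde{T}_m = \sum_{i=0}^{m-1} T^i S N^{m-1-i}$, and \eqref{P_Am and Q_Am} gives $P_{A^m} = U\begin{bmatrix} I_t & 0 \\ 0 & P_{N^m}\end{bmatrix}U^*$.

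Then I would just multiply. Using $U^*U=I_n$,
\[
(A^{\odagger})^{m+1} A^m = U\begin{bmatrix} T^{-(m+1)} T^m & T^{-(m+1)}\tilde{T}_m \\ 0 & 0\end{bmatrix}U^* = U\begin{bmatrix} T^{-1} & T^{-(m+1)}\tilde{T}_m \\ 0 & 0\end{bmatrix}U^*,
\]
and multiplying on the right by $P_{A^m}$ affects only the second block column, replacing $T^{-(m+1)}\tilde{T}_m$ by $T^{-(m+1)}\tilde{T}_m P_{N^m}$. This yields exactly \eqref{canonical m-weak core}, with all block sizes consistent ($T^{-1}$ is $t\times t$ and $T^{-(m+1)}\tilde{T}_m P_{N^m}$ is $t\times(n-t)$).

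Since every step is an application of results already recorded in the excerpt together with elementary block arithmetic, there is no genuine obstacle here; the only points meriting a line of care are the inductive computation of $(A^{\odagger})^{m+1}$ and, if one wishes to be thorough, the degenerate case $k=0$, which is excluded by convention once the core-EP decomposition is in force (so one assumes $\mathrm{Ind}(A)=k\ge 1$).
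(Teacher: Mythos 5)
Your proposal is correct and follows exactly the route of the paper's own proof: invoke Theorem~\ref{properties 1}(a) to write $A^{\core_m}=(A^{\odagger})^{m+1}A^mP_{A^m}$, substitute the block forms from \eqref{core EP representation}, \eqref{core EP Am}, and \eqref{P_Am and Q_Am}, and multiply. The only addition is your explicit remark on the induction giving $(A^{\odagger})^{m+1}$, which the paper leaves implicit.
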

\begin{proof}
By Theorem \ref{properties 1} (a) we have $A^{\core_m}=(A^{\odagger})^{m+1}A^m P_{A^m}$. By using the canonical forms of $A^{\odagger}$, $A^m$, and $P_{A^m}$ given  in
\eqref{core EP representation}, \eqref{core EP Am}, and \eqref{P_Am and Q_Am}, respectively, we have
\begin{eqnarray*}
A^{\core_m} &=& U \begin{bmatrix}
 T^{-(m+1)} & 0 \\
  0 & 0
  \end{bmatrix}\begin{bmatrix}
 T^m & \tilde{T}_m \\
  0 & N^m
  \end{bmatrix}\left[\begin{array}{cc}
I_t &  0 \\
0 & P_{N^m}
\end{array}\right]U^* \\
&=&
U \begin{bmatrix}
 T^{-1} & T^{-(m+1)}\tilde{T}_m \\
  0 & 0
  \end{bmatrix}\left[\begin{array}{cc}
I_t &  0 \\
0 & P_{N^m}
\end{array}\right]U^* \\
  &=& U\begin{bmatrix}
 T^{-1} & T^{-(m+1)}\tilde{T}_m P_{N^m} \\
  0 & 0
  \end{bmatrix}U^*.
\end{eqnarray*}
\end{proof}

From Theorem \ref{thm canonical m-weak core}, it is possible to obtain more features/properties of the $m$-weak core inverse, some of which   coincide with those attributed to the core inverse.

\begin{theorem} \label{properties 2}
Let $A\in \Cnn$, $l\ge k=\ind(A)$, and $m\in \mathbb{N}$.
Then
\begin{enumerate}[{\rm(a)}]
\item $A^{\core_m}AA^{\core_m}=A^{\core_m}$, i.e., $A^{\core_m}$ is an outer inverse of $A$.
\item $A_1A^{\core_m}A_1=A_1$ and $A^{\core_m}A_1A^{\core_m}=A^{\core_m}$, where $A_1=AA^{\odagger}A$.
\item $A(A^{\core_m})^2=A^{\core_m}$.
\item $A^{\core_m} A^{l+1}=A^l$.
\item $A^{\core_m}=(A^d)^{m+1} P_{A^l} A^m P_{A^m}$.
\item $A^{\core_m}=A^l(A^{l+m+1})^\dag A^m  P_{A^m}$.
\item $\ra(A^{\core_m})=\ra(A^k)$.
\item $\Ra(A^{\core_m})=\Ra(A^k)$.
\item $\Nu(A^{\core_m})=\Nu((A^k)^*A^m P_{A^m})$.
\item $A^{\core_m} A^{m+1}=(A^{\odagger})^{m+1}A^{2m+1}$.
\end{enumerate}
\end{theorem}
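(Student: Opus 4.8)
The plan is to read off assertions (a)--(d) and (g)--(i) directly from the canonical form \eqref{canonical m-weak core}, working throughout in the core-EP coordinates $A = U\left[\begin{smallmatrix}T & S\\0 & N\end{smallmatrix}\right]U^*$ of \eqref{core EP decomposition} and writing $A^{\core_m} = U\left[\begin{smallmatrix}T^{-1} & W\\0 & 0\end{smallmatrix}\right]U^*$ with $W:=T^{-(m+1)}\tilde{T}_m P_{N^m}$. Since $T$ is nonsingular, a $2\times 2$ block multiplication gives $A^{\core_m}A = U\left[\begin{smallmatrix}I_t & \ast\\0 & 0\end{smallmatrix}\right]U^*$, and multiplying $A^{\core_m}$ on the right then reproduces $A^{\core_m}$, which is (a) (the precise shape of $W$ is irrelevant here). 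For (b) one first computes $A_1 = AA^{\odagger}A = U\left[\begin{smallmatrix}T & S\\0 & 0\end{smallmatrix}\right]U^*$ from \eqref{core EP representation}, and the same remark (both $A_1A^{\core_m}$ and $A^{\core_m}A_1$ have the form $U\left[\begin{smallmatrix}I_t & \ast\\0 & 0\end{smallmatrix}\right]U^*$) yields $A_1A^{\core_m}A_1=A_1$ and $A^{\core_m}A_1A^{\core_m}=A^{\core_m}$. Item (c), the identity $A(A^{\core_m})^2=A^{\core_m}$, is again a one-line block product; and for (g)--(h) the column space of $\left[\begin{smallmatrix}T^{-1} & W\\0 & 0\end{smallmatrix}\right]$ is $\mathbb{C}^t\oplus\{0\}$ precisely because $T$ is invertible, and this coincides with the column space of the block form \eqref{core EP Am} of $A^k$ (recall $N^k=0$), so $\Ra(A^{\core_m})=\Ra(A^k)$ and hence $\rk(A^{\core_m})=t=\rk(A^k)$. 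I will establish (h) directly here rather than quote Theorem \ref{properties 1}(f), whose stated proof already appeals to it.

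For (d) I use that $l\ge k$ forces $N^l=N^{l+1}=0$ in \eqref{core EP Am}, so both $A^l$ and $A^{l+1}$ have vanishing bottom block-rows; then $A^{\core_m}A^{l+1} = U\left[\begin{smallmatrix}T^l & T^{-1}\tilde{T}_{l+1}\\0 & 0\end{smallmatrix}\right]U^*$, and the computation is finished by the identity $\tilde{T}_{l+1}=T\,\tilde{T}_l$, valid whenever $l\ge k$ because the $i=0$ summand $SN^l$ of $\tilde{T}_{l+1}=\sum_{i=0}^{l}T^iSN^{l-i}$ vanishes. For (i) I solve $A^{\core_m}x=0$ in the $U$-coordinates, finding that $x=U\left[\begin{smallmatrix}x_1\\x_2\end{smallmatrix}\right]$ lies in $\Nu(A^{\core_m})$ iff $x_1=-T^{-m}\tilde{T}_mP_{N^m}x_2$; I then compute $(A^k)^*A^mP_{A^m}$ blockwise using \eqref{P_Am and Q_Am} and $N^k=0$, and the invertibility of $(T^k)^*$ collapses the two resulting block-row conditions to the single equation $T^mx_1=-\tilde{T}_mP_{N^m}x_2$, so the two null spaces coincide.

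Items (e), (f), (j) are cleanest via structural identities rather than the decomposition. For (j): $A^{\core_m}A^{m+1}=A^{\weak_m}P_{A^m}A^{m+1}=A^{\weak_m}A^{m+1}$ since $P_{A^m}A^{m+1}=A^{m+1}$, and then \eqref{m-WG} gives $A^{\weak_m}A^{m+1}=(A^{\odagger})^{m+1}A^{2m+1}$. For (e): starting from Theorem \ref{properties 1}(a) it suffices to prove $(A^{\odagger})^{m+1}=(A^d)^{m+1}P_{A^k}$, which follows by iterating \eqref{core-EP inverse} ($A^{\odagger}=A^dP_{A^k}$) together with $P_{A^k}A^d=A^d$ (valid since $\Ra(A^d)=\Ra(A^k)$), and then using $P_{A^l}=P_{A^k}$ for $l\ge k$. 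I expect (f) to be the main obstacle: here I would compute $A^l(A^{l+m+1})^\dag$ from the explicit Moore--Penrose formula \eqref{MP of Am} applied to $A^{l+m+1}$---noting that $l+m+1\ge k$ forces $N^{l+m+1}=0$, hence $Q_{N^{l+m+1}}=0$, $(N^{l+m+1})^\dag=0$ and $\Omega_{l+m+1}=\tilde{T}_{l+m+1}$---and then invoke the iterated form $\tilde{T}_{l+m+1}=T^{m+1}\tilde{T}_l$ of the identity from (d) to factor $T^{m+1}$ out of $\Delta_{l+m+1}=(T^{l+m+1}(T^{l+m+1})^*+\tilde{T}_{l+m+1}\tilde{T}_{l+m+1}^*)^{-1}$; after cancellation this leaves $A^l(A^{l+m+1})^\dag = U\left[\begin{smallmatrix}T^{-(m+1)} & 0\\0 & 0\end{smallmatrix}\right]U^* = (A^{\odagger})^{m+1}$, and (f) follows on multiplying by $A^mP_{A^m}$ on the right and applying Theorem \ref{properties 1}(a). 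Throughout, the delicate points are the index-$k$ truncations $N^l=0=SN^l$ that make off-diagonal blocks collapse and the bookkeeping of the sums $\tilde{T}_\ell$; the rest is routine block algebra.
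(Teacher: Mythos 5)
Your proposal is correct, and its backbone is the same as the paper's: everything except (e), (f), (j) is read off from the canonical form \eqref{canonical m-weak core} together with \eqref{core EP decomposition} and \eqref{core EP Am}, exactly as the paper does (the paper states (a)--(c) as "easily verified" by these block products, and your computations supply the details). The genuine divergences are in (e), (f) and (i). For (e) you derive $(A^{\odagger})^{m+1}=(A^d)^{m+1}P_{A^k}$ structurally by iterating $A^{\odagger}=A^dP_{A^k}$ with $P_{A^k}A^d=A^d$, whereas the paper block-multiplies the Drazin representation against \eqref{projector}; both are fine, and yours is decomposition-free. For (f) the paper has a shorter route you might prefer: starting from (e) with $l$ replaced by $m+l+1$, write $P_{A^{m+l+1}}=A^{m+l+1}(A^{m+l+1})^\dag$ and collapse $(A^d)^{m+1}A^{m+l+1}=(A^dA)^{m+1}A^l=A^dA^{l+1}=A^l$; your explicit evaluation of $A^l(A^{l+m+1})^\dag$ via \eqref{MP of Am} and the factorization $\tilde{T}_{l+m+1}=T^{m+1}\tilde{T}_l$, $\Delta_{l+m+1}=((T^{m+1})^*)^{-1}(T^l(T^l)^*+\tilde{T}_l\tilde{T}_l^*)^{-1}(T^{m+1})^{-1}$ does check out, but it is considerably heavier bookkeeping for the same identity $A^l(A^{l+m+1})^\dag=(A^{\odagger})^{m+1}$. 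For (i) you solve $A^{\core_m}x=0$ in $U$-coordinates and observe that the second block row of $(A^k)^*A^mP_{A^m}$ is $\tilde{T}_k^*$ times the (normalized) first, while the paper argues coordinate-free via the rank sandwich $\Nu(A^{\core_m})=\Nu(A^{\odagger}A^mP_{A^m})$ and $\Nu(A^{\odagger})=\Nu((A^k)^*)$; your version is more elementary, the paper's generalizes more readily. Finally, your decision to prove (h) directly from the column space of the block form is sound and in fact tidies up the paper's ordering, since Theorem \ref{properties 1}(f) forward-references Theorem \ref{properties 2}(h); note, though, that the paper's own proof of (h) only uses Theorem \ref{properties 1}(a) and part (g), so there is no actual circularity to repair.
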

\begin{proof}
The assertions (a)-(c) can be verified easily by using  \eqref{core EP decomposition} and \eqref{canonical m-weak core}. \\
(d) From \eqref{core EP Am} we have
\[A^l = U\left[\begin{array}{cc}
T^l & \tilde{T}_l \\
0 & 0
\end{array}\right]U^*, \quad \ell\ge k.\]
Thus, \eqref{canonical m-weak core} implies
\[A^{\core_m}A^{\ell+1}=(A^{\core_m}A)A^\ell=U\begin{bmatrix}
I_t & T^{-1}S+T^{-(m+1)}\tilde{T}_m P_{N^m} N\\
  0 & 0 \\
  \end{bmatrix}\left[\begin{array}{cc}
T^{l} & \tilde{T}_{l} \\
0 & 0
\end{array}\right]U^*=A^l.\]
(e) From \cite[Theorem 2.2]{FeLePrTh} we know \[A^d=U\begin{bmatrix}
 T^{-1} & T^{-(k+1)}\tilde{T}_k  \\
  0 & 0 \\
  \end{bmatrix}.\]
In consequence,  \eqref{projector} implies
\begin{equation}\label{m+1 dagger}
(A^d)^{m+1}P_{A^l}=U\begin{bmatrix}
 T^{-(m+1)} & T^{-(m+k+1)}\tilde{T}_k  \\
  0 & 0 \\
  \end{bmatrix}\begin{bmatrix}
 I_t & 0  \\
  0 & 0 \\
  \end{bmatrix}U^*=\begin{bmatrix}
 T^{-(m+1)} & 0  \\
  0 & 0 \\
  \end{bmatrix}=(A^{\odagger})^{m+1},
\end{equation}
where the last equality is due to \eqref{core EP representation}. \\
Thus, the statement follows from Theorem \ref{properties 1} (a). \\
(f) Since $m+l+1\ge k$, from (e) we obtain $A^{\core_m}=(A^d)^{m+1} P_{A^{m+l+1}} A^m P_{A^m}$. Thus, as $l\ge k$ we have
\begin{eqnarray*}
A^{\core_m} &=& (A^d)^{m+1} A^{m+l+1} (A^{m+l+1})^\dag A^m P_{A^m}\\
&=& (A^dA)^{m+1} A^l (A^{m+l+1})^\dag A^m P_{A^m} \\
&=& A^d A^{l+1} (A^{m+l+1})^\dag A^m P_{A^m}\\
&=& A^l (A^{m+l+1})^\dag A^m P_{A^m}.
\end{eqnarray*}
(g) From \eqref{core EP Am}, for $\ell=k$, we have
\begin{equation}\label{A^k}
A^k=U\begin{bmatrix}
 T^k & \tilde{T}_k  \\
  0 & 0 \\
  \end{bmatrix}.
\end{equation}
Now, from  \eqref{canonical m-weak core} and \eqref{A^k} it is easy to see that $\ra(A^{\core_m})=\ra(T)=\ra(A^k)$.\\
(h) By Theorem \ref{properties 1} (a) we have $\Ra(A^{\core_m})\subseteq \Ra(A^{\odagger})=\Ra(A^k)$, where the last equality is due to  \eqref{def core EP}. Now, (g) completes the statement.\\
(i) By Theorem \ref{properties 1} (a)  we  get
\begin{equation}\label{null}
\Nu (A^{\odagger} A^m P_{A^m})\subseteq \Nu((A^{\odagger})^{m+1} A^m P_{A^m})= \Nu(A^{\core_m}).
\end{equation}
Also, from \eqref{core EP representation} and \eqref{core EP Am} one can obtain $A^{\odagger} =  A^m (A^{\odagger})^{m+1}$. So, Theorem \ref{properties 1} (a) yields
\begin{equation}\label{rank}
\ra(A^{\core_m})\le \ra(A^{\odagger} A^m P_{A^m} )\le \ra((A^{\odagger})^{m+1} A^m P_{A^m})=\ra(A^{\core_m}).
\end{equation}
From \eqref{null} and \eqref{rank} we obtain $\Nu(A^{\core_m})=\Nu(A^{\odagger}A^mP_{A^m})$. Now, by \cite[Theorem 3.2]{FeLeTh3} we know that $\Nu (A^{\odagger})=\Nu ((A^k)^*)$. In consequence, 
\[x\in \Nu (A^{\core_m}) \Leftrightarrow A^mP_{A^m}x\in \Nu (A^{\odagger})=\Nu ((A^k)^*).\]
Therefore,
$x\in \Nu (A^{\core_m})$ if and only if  $ x\in \Nu ((A^k)^* A^mP_{A^m})$.
\\
(j) It is a direct consequence from Theorem \ref{properties 1} (a).
\end{proof}

Two important consequences  of  Theorems \ref{properties 1} and  \ref{properties 2} are  the following  characterizations of the $m$-weak core inverse.

\begin{theorem} \label{characterization 1} Let $A\in \Cnn$ and $m\in \mathbb{N}$. Then  the system of equations
\begin{equation}\label{system 1}
XAX=X, \quad AX=(A^{\odagger})^m A^m P_{A^m},\quad XA=(A^{\odagger})^{m+1}A^m P_{A^m}A,
\end{equation}
is consistent and its unique solution is the matrix $X=A^{\core_m}$.
\end{theorem}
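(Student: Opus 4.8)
The plan is to prove this in two parts: first that $X = A^{\core_m}$ satisfies the three equations of \eqref{system 1}, and second that the solution is unique. For the existence part, most of the work is already done. Equation $XAX = X$ is exactly Theorem \ref{properties 2}(a). The equation $AX = (A^{\odagger})^m A^m P_{A^m}$ is the first displayed equality in Theorem \ref{properties 1}(c). The equation $XA = (A^{\odagger})^{m+1} A^m P_{A^m} A$ is precisely Theorem \ref{properties 1}(d). So the consistency half reduces to citing results already established.

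For uniqueness, suppose $X$ satisfies all three equations of \eqref{system 1}. The natural approach is to show $X$ is forced to equal $(A^{\odagger})^{m+1} A^m P_{A^m} = A^{\core_m}$ (using Theorem \ref{properties 1}(a)) by combining the equations algebraically. First I would use $XAX = X$ together with $AX = (A^{\odagger})^m A^m P_{A^m}$ to write $X = X(AX) = X (A^{\odagger})^m A^m P_{A^m}$. Next, I would left-multiply the third equation $XA = (A^{\odagger})^{m+1} A^m P_{A^m} A$ on the right by a suitable power of $A^{\odagger}$ and/or by $A^m P_{A^m}$ to pin down $X \cdot [\text{something}]$; alternatively, observe that $XA$ is fully determined, so $XA \cdot Z = (A^{\odagger})^{m+1} A^m P_{A^m} A Z$ for any $Z$. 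Taking $Z = (A^{\odagger})^m A^m P_{A^m}$ and using the identity $A^{\odagger} = A^m (A^{\odagger})^{m+1}$ (established in the proof of Theorem \ref{properties 2}(i)) together with $A P_{A^m} (A^{\odagger})^m = $ (a computation via the core-EP decomposition) should collapse the right-hand side to $(A^{\odagger})^{m+1} A^m P_{A^m}$. Then $X = X(AX) = (XA) X = (XA)(A^{\odagger})^m A^m P_{A^m}$ via $XAX=X$ applied the other way, and substituting the known value of $XA$ finishes it.

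The cleanest route to uniqueness, and the one I would actually carry out, is via the core-EP decomposition \eqref{core EP decomposition}: write $X = U \left[\begin{smallmatrix} X_1 & X_2 \\ X_3 & X_4 \end{smallmatrix}\right] U^*$ in block form conformal with \eqref{core EP decomposition}, and use the canonical forms \eqref{core EP representation}, \eqref{core EP Am}, \eqref{P_Am and Q_Am} to compute the right-hand sides of the three equations explicitly. The equation for $AX$ forces the bottom block rows of $AX$, hence (since $T$ is nonsingular, $A$ acts invertibly on the top) constraints on $X_1, X_2$; the equation for $XA$ forces $X_3 = X_4 = 0$ by a rank/support argument (the right-hand side $(A^{\odagger})^{m+1}A^m P_{A^m} A$ has zero bottom block rows, and multiplication by $A$ on the right, whose top-left block is the invertible $T$, is injective on the relevant space); and then $XAX = X$ pins down $X_1, X_2$ uniquely. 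Comparing with \eqref{canonical m-weak core} yields $X = A^{\core_m}$.

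The main obstacle I anticipate is the uniqueness argument: the three equations determine $AX$ and $XA$ but not obviously $X$ itself, so one must verify that the outer-inverse condition $XAX = X$ together with these two products genuinely pins $X$ down. In block-matrix terms this is the step where one argues that the lower blocks $X_3, X_4$ vanish — this needs the fact that $A$ restricted appropriately (the nonsingularity of $T$ and the structure of $P_{N^m}$, $N$) forces it, and one must be careful that $N^m$ need not be zero when $m < k$, so $P_{N^m}$ is a genuine nontrivial projector. Handling $P_{N^m}$ correctly in the bookkeeping is the delicate point; everything else is routine block multiplication using the formulas already recorded in Sections 3 and 4.
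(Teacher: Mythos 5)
Your existence half coincides with the paper's: all three equations are read off directly from Theorem \ref{properties 2}(a) and Theorem \ref{properties 1}(c),(d), so nothing to add there. The problem is uniqueness, which you flag as the ``main obstacle'' and then attack with a block computation. You have overlooked the standard two-line argument, which is what the paper uses: if $X_1$ and $X_2$ both solve the system, then $AX_1=AX_2$ and $X_1A=X_2A$, since the second and third equations prescribe these products explicitly, and hence
\[
X_2=X_2AX_2=(X_1A)X_2=X_1(AX_2)=X_1(AX_1)=X_1AX_1=X_1.
\]
An outer inverse is always uniquely determined by the pair $(AX,\,XA)$ via exactly this chain; no decomposition and no bookkeeping with $P_{N^m}$ is needed. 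Your first ``algebraic'' route circles around this ($X=X(AX)=(XA)X$) but never lands it, because you keep trying to solve for a single $X$ rather than comparing two putative solutions.

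Your fallback route via the core-EP decomposition also contains a genuine error at the step you call a ``rank/support argument.'' Writing $X=U\left[\begin{smallmatrix} X_1 & X_2 \\ X_3 & X_4\end{smallmatrix}\right]U^*$ conformally with \eqref{core EP decomposition}, the requirement that $XA$ have zero bottom block rows gives $X_3T=0$, hence $X_3=0$ since $T$ is invertible, and then $X_4N=0$. But $N$ is nilpotent, hence singular whenever $\ind(A)\ge 2$, so $X_4N=0$ does \emph{not} force $X_4=0$: right multiplication by $A$ is not injective on the lower blocks, contrary to your claim. To kill $X_4$ you must invoke another equation; for instance, once $X_3=0$ and $X_4N=0$, the entire bottom block row of $XA$ vanishes, so $XAX=X$ forces the bottom block row of $X$, in particular $X_4$, to vanish. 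So the route is patchable, but as written the key step fails --- and in any case the elementary comparison of two solutions above makes the whole computation unnecessary.
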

\begin{proof}
\emph{Existence.} Let $X:=A^{\core_m}$. From Theorems \ref{properties 1} and  \ref{properties 2}  it is easy to see that $X$ satisfies  all the  equations in \eqref{system 1}. \\
\emph{Uniqueness.} We assume the $X_1$ and $X_2$ are two solutions of the system of equations in (\ref{system 1}). Then $X_2=X_2AX_2=X_1AX_2=X_1AX_1=X_1$.
 \end{proof}

\begin{remark} Note that the second and third  conditions in system \eqref{system 1} can be replaced  by $AX=A^{\weak_{m-1}} A P_{A^m}$ and $XA=A^{\weak_m}P_{A^m}A$, respectively.
\end{remark}

\begin{theorem}\label{characterization 2} Let $A\in \Cnn$ with $\ind(A)=k$, and $m\in \mathbb{N}$.  Then the system of equations
\begin{equation}\label{system 2}
AX=(A^{\odagger})^m A^m P_{A^m},\quad \Ra(X)\subseteq \Ra(A^k),
\end{equation}
is consistent and its unique solution is the matrix $X=A^{\core_m}$.
\end{theorem}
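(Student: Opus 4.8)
The plan is to verify that $X = A^{\core_m}$ satisfies both conditions of \eqref{system 2}, and then to prove that any matrix satisfying these two conditions must equal $A^{\core_m}$. For existence, the first equation $AX = (A^{\odagger})^m A^m P_{A^m}$ is exactly Theorem \ref{properties 2}(c) combined with Theorem \ref{properties 1}(c): indeed $A A^{\core_m} = (A^{\odagger})^m A^m P_{A^m}$ is the first equality in Theorem \ref{properties 1}(c). The range inclusion $\Ra(A^{\core_m}) \subseteq \Ra(A^k)$ is Theorem \ref{properties 2}(h) (in fact equality holds there). So the existence half is immediate from results already established.

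For uniqueness, suppose $X_1$ and $X_2$ both satisfy \eqref{system 2}. First I would note that $\Ra(X_i) \subseteq \Ra(A^k)$ means each column of $X_i$ lies in $\Ra(A^k)$, so there is a matrix $Z_i$ with $X_i = A^k Z_i$; equivalently, using that $A^{\odagger}$ acts as an identity-like operator on $\Ra(A^k)$ (since $\Ra(A^{\odagger}) = \Ra(A^k)$ and $A^{\odagger}AA^{\odagger}=A^{\odagger}$), one gets $A^{\odagger} A X_i = X_i$. This last identity is the key reduction: for any $Y$ with $\Ra(Y)\subseteq\Ra(A^k)$ one has $A^{\odagger}AY = Y$, because $A^{\odagger}A$ is the (oblique) projector onto $\Ra(A^{\odagger})=\Ra(A^k)$ along $\Nu(A^{\odagger})$, and $Y$ already maps into $\Ra(A^k)$. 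Granting this, $X_i = A^{\odagger}(AX_i) = A^{\odagger}(A^{\odagger})^m A^m P_{A^m} = (A^{\odagger})^{m+1} A^m P_{A^m}$, which is independent of $i$, hence $X_1 = X_2 = (A^{\odagger})^{m+1}A^m P_{A^m} = A^{\core_m}$ by Theorem \ref{properties 1}(a). An alternative, purely computational route is to use the core-EP decomposition \eqref{core EP decomposition}: write $X_i = U\left[\begin{smallmatrix} X_{11} & X_{12} \\ X_{21} & X_{22}\end{smallmatrix}\right]U^*$, observe that $\Ra(X_i)\subseteq\Ra(A^k)$ forces the bottom block rows to vanish by \eqref{A^k}, and then match $AX_i$ against the explicit form of $(A^{\odagger})^m A^m P_{A^m}$ coming from \eqref{core EP representation}, \eqref{core EP Am} and \eqref{P_Am and Q_Am} to pin down $X_{11}$ and $X_{12}$ uniquely.

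The main obstacle is the cancellation step in the uniqueness argument: from $AX_1 = AX_2$ one cannot in general conclude $X_1 = X_2$ because $A$ is singular (index $k \ge 1$), so one genuinely needs the range restriction $\Ra(X_i)\subseteq\Ra(A^k)$ to recover injectivity. The cleanest way to exploit this is the identity $A^{\odagger}AY = Y$ whenever $\Ra(Y)\subseteq\Ra(A^k)$; I would either cite it from the core-EP literature or prove it in one line from $\Ra(A^{\odagger})=\Ra((A^{\odagger})^*)=\Ra(A^k)$ in \eqref{def core EP} together with $A^{\odagger}AA^{\odagger}=A^{\odagger}$. Once that is in hand, the proof is two lines; everything else is bookkeeping with results from Theorems \ref{properties 1} and \ref{properties 2}.
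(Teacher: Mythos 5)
Your proof is correct, and the uniqueness half takes a genuinely different route from the paper's. The paper argues by subtraction: from $AX_1=AX_2$ it gets $\Ra(X_1-X_2)\subseteq\Nu(A)\subseteq\Nu(A^k)$, combines this with $\Ra(X_1-X_2)\subseteq\Ra(A^k)$, and concludes from $\Ra(A^k)\cap\Nu(A^k)=\{0\}$ (the core--nilpotent direct sum at index $k$) that $X_1=X_2$. You instead cancel on the left: since $\Ra(X)\subseteq\Ra(A^k)=\Ra(A^{\odagger})$ one can write $X=A^{\odagger}W$, and then $A^{\odagger}AX=A^{\odagger}AA^{\odagger}W=A^{\odagger}W=X$, so $X=A^{\odagger}\bigl((A^{\odagger})^m A^m P_{A^m}\bigr)=(A^{\odagger})^{m+1}A^m P_{A^m}=A^{\core_m}$ by Theorem \ref{properties 1}(a). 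Your version buys a little more: it derives the explicit closed form of the unique solution in one stroke rather than only showing two solutions coincide, and it uses only the defining properties \eqref{def core EP} of the core-EP inverse rather than the decomposition $\mathbb{C}^n=\Ra(A^k)\oplus\Nu(A^k)$; the paper's version is marginally more elementary in that it never invokes $A^{\odagger}$ in the uniqueness step. One small imprecision: $A^{\odagger}A$ is an idempotent with range $\Ra(A^{\odagger})$, but it projects along $\Nu(A^{\odagger}A)$, which differs from $\Nu(A^{\odagger})$ in general (in the core-EP decomposition $A^{\odagger}A=U\left[\begin{smallmatrix} I_t & T^{-1}S\\ 0 & 0\end{smallmatrix}\right]U^*$, whose null space depends on $S$). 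This parenthetical does not affect your argument, since the identity you actually use, $A^{\odagger}AY=Y$ for $\Ra(Y)\subseteq\Ra(A^k)$, is correctly justified by the one-line factorization $Y=A^{\odagger}W$ together with $A^{\odagger}AA^{\odagger}=A^{\odagger}$.
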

\begin{proof}
\emph{Existence.} Let $X:=A^{\core_m}$. Theorems \ref{properties 1} and  \ref{properties 2}  it is easy to see that $X$ satisfies  all the  equations in \eqref{system 2}.  \\
\emph{Uniqueness.}  Let $X_1$ and $X_2$ be two solutions of the system \eqref{system 2},
that is, \[AX_1=AX_2=(A^{\odagger})^m A^m P_{A^m}, \quad \Ra(X_1)\subseteq \Ra(A^k), \quad \Ra(X_2)\subseteq\Ra(A^k).\] 
Thus,   $\Ra(X_1-X_2)\subseteq \Nu(A)\subseteq \Nu(A^k)$ and
$\Ra(X_1-X_2)\subseteq \Ra(A^k).$ Therefore, $\Ra(X_1-X_2)\subseteq \Ra(A^k)\cap \Nu(A^k)=\{0\}$ because $A$ has index $k.$ Thus, $X_1=X_2.$
\end{proof}

\begin{remark} Note that the first condition in system \eqref{system 2} can be replaced  by $AX=A^{\weak_{m-1}} A P_{A^m}$.
\end{remark}

\begin{remark} Note that when $k=1$, system \eqref{system 2} reduces to \eqref{core inverse}, that is, we obtain the conditions that characterize the core inverse. In fact, as $m\ge k=1$, from \eqref{core EP representation}, \eqref{core EP Am}, and \eqref{P_Am and Q_Am}, slight calculations lead to
$AX=(A^{\odagger})^m A^m P_{A^m}=P_A$. Obviously, $\Ra(X)\subseteq \Ra(A)$.\\
When $m=1$, the equations in \eqref{system 2} reduces to the conditions obtained in \cite[Theorem 3.11]{FeLePrTh}. \\
While if $m\ge k$, system \eqref{system 2} reduces to $AX=P_{A^k}$ and 
$\Ra(X)\subseteq \Ra(A^k)$, which is a well-known characterization of the core-EP inverse obtained in \cite[Theorem 2.7]{FeLeTh1}.
\end{remark}

From above theorem we can deduce more necessary and sufficient conditions for a square matrix to be the $m$-weak core inverse. 

\begin{corollary}\label{characterization 3}
Let $A \in \Cnn$ with $\text{Ind}(A)=k$. Then the following statements are equivalent:
\begin{enumerate}[{\rm(a)}]
\item $X$ is the  $m$-weak core inverse $A^{\core_m}$ of $A$;
\item $AX=(A^{\odagger})^m A^m P_{A^m}$ and $\Ra(X)=\Ra(A^k)$;
\item $AX=(A^{\odagger})^m A^m P_{A^m}$ and $AX^2=X$.
\end{enumerate}
\end{corollary}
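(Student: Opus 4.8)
The plan is to prove the three-way equivalence by a cycle of implications, leaning entirely on the characterizations already established. First I would show (a) $\Rightarrow$ (b): if $X = A^{\core_m}$, then $AX = (A^{\odagger})^m A^m P_{A^m}$ is exactly the first equation of system \eqref{system 2} (equivalently, a consequence of Theorem \ref{properties 1}(c)), while $\Ra(X) = \Ra(A^k)$ is Theorem \ref{properties 2}(h). This direction is immediate.

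Next I would close the loop through (b) $\Rightarrow$ (c) $\Rightarrow$ (a). For (b) $\Rightarrow$ (c): assume $AX = (A^{\odagger})^m A^m P_{A^m}$ and $\Ra(X) = \Ra(A^k)$. The first equation of (c) is already assumed, so I only need $AX^2 = X$. Since $\Ra(X) \subseteq \Ra(A^k)$, Theorem \ref{characterization 2} applies and forces $X = A^{\core_m}$; then $AX^2 = X$ follows from Theorem \ref{properties 2}(c). For (c) $\Rightarrow$ (a): assume $AX = (A^{\odagger})^m A^m P_{A^m}$ and $AX^2 = X$. The relation $AX^2 = X$ gives $\Ra(X) = \Ra(AX^2) \subseteq \Ra(AX) = \Ra((A^{\odagger})^m A^m P_{A^m}) \subseteq \Ra(A^{\odagger}) = \Ra(A^k)$, using \eqref{def core EP} for the last equality. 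Hence $X$ satisfies both conditions of system \eqref{system 2}, and Theorem \ref{characterization 2} yields $X = A^{\core_m}$, i.e. (a) holds.

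The genuinely delicate point is the step (c) $\Rightarrow$ (a), specifically extracting the range inclusion $\Ra(X) \subseteq \Ra(A^k)$ from the purely algebraic identity $AX^2 = X$. One must be careful that the chain $\Ra(X) = \Ra(AX^2) \subseteq \Ra(AX)$ is valid — it is, because $AX^2 = (AX)X$ so every column of $AX^2$ lies in $\Ra(AX)$ — and then identify $\Ra(AX)$ via the explicit right-hand side. I would note that $(A^{\odagger})^m A^m P_{A^m}$ has its columns in $\Ra((A^{\odagger})^m) \subseteq \Ra(A^{\odagger})$, and $\Ra(A^{\odagger}) = \Ra(A^k)$ by the defining property \eqref{def core EP} of the core-EP inverse. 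Everything else is a direct invocation of Theorems \ref{properties 2} and \ref{characterization 2}, so no new computation beyond these range manipulations is required.

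I should also record, for completeness, that the consistency of each system is automatic once (a) is known to imply (b) and (c), since $A^{\core_m}$ itself is then an explicit solution; this is already guaranteed by Theorems \ref{properties 1} and \ref{properties 2}. Thus the corollary follows formally from the preceding results, and the write-up can be kept to the short implication cycle described above.
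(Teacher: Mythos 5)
Your proof is correct and follows the same skeleton as the paper's: a cycle of implications (a) $\Rightarrow$ (b) $\Rightarrow$ (c) $\Rightarrow$ (a) anchored in Theorem \ref{characterization 2}. The two places where you diverge are both harmless and arguably cleaner. For (b) $\Rightarrow$ (c), the paper verifies $AX^2=X$ directly by computing $[(A^{\odagger})^m A^m]P_{A^m}P_{A^k}=P_{A^k}$ in the core-EP decomposition and combining it with $P_{A^k}X=X$; you instead observe that (b) already contains both hypotheses of Theorem \ref{characterization 2}, conclude $X=A^{\core_m}$, and read off $AX^2=X$ from Theorem \ref{properties 2}(c), which avoids the block computation entirely. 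For (c) $\Rightarrow$ (a), the paper extracts $\Ra(X)\subseteq\Ra(A^k)$ from the identity $X=A^kX^{k+1}$ obtained by iterating $AX^2=X$, whereas you get it from the chain $\Ra(X)=\Ra(AX^2)\subseteq\Ra(AX)\subseteq\Ra(A^{\odagger})=\Ra(A^k)$, which additionally uses the first equation of (c); the paper's variant shows the inclusion follows from $AX^2=X$ alone, while yours is a one-line range computation relying on $\Ra(A^{\odagger})=\Ra(A^k)$ from \eqref{def core EP}. Both routes are valid and there is no gap.
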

\begin{proof}
(a) $ \Rightarrow $ (b). It follows by applying  
Theorem \ref{properties 1} (c) and  Theorem \ref{properties 2} (h).  \\
(b) $ \Rightarrow $ (c). Firstly note that $\Ra(X)=\Ra(A^k)$ implies  $P_{A^k}X=X$. Also, by \eqref{core EP Am}, \eqref{P_Am and Q_Am} and \eqref{projector} we obtain
\[[(A^{\odagger})^m A^m] P_{A^m} P_{A^k}=U\begin{bmatrix}
 I_t & T^{-m}\tilde{T}_m  \\
  0 & 0 \\
  \end{bmatrix}\left[\begin{array}{cc}
I_t & 0 \\
0 & P_{N^m}
\end{array}\right]\left[\begin{array}{cc}
I_t & 0 \\
0 & 0
\end{array}\right]U^*=P_{A^k}.\]
Thus,  $AX^2=(A^{\odagger})^m A^m P_{A^m}P_{A^k}X=P_{A^k} X=X$. \\
(c) $\Rightarrow$ (a) It is easy to see that $AX^2=X$ implies
$X =  A^k X^{k+1}$. So,  $\Ra(X)\subseteq \Ra(A^k)$.  Now, the implication follows from Theorem \ref{characterization 2}.
\end{proof}

It is known that $A^{\core}$ belong to $A\{1\}$. It is of interest to
inquire whether the $m$-weak core inverse belongs to $A\{1\}$ as well.

\begin{theorem} \label{gi} Let $A\in \Cnn$. Then $A^{\core_m}\in A\{1\}$  if and only if $\ind(A) \le 1$.
\end{theorem}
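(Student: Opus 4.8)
The plan is to use the core-EP decomposition \eqref{core EP decomposition} together with the explicit formula \eqref{canonical m-weak core} for $A^{\core_m}$, and simply compute the product $AA^{\core_m}A$, comparing it with $A$. Writing $A=U\begin{bmatrix}T & S\\0 & N\end{bmatrix}U^*$ and $A^{\core_m}=U\begin{bmatrix}T^{-1} & T^{-(m+1)}\tilde T_m P_{N^m}\\0 & 0\end{bmatrix}U^*$, a direct multiplication gives
\[
AA^{\core_m}A = U\begin{bmatrix} T & * \\ 0 & 0 \end{bmatrix}U^*,
\]
so the $(1,1)$ and $(2,1)$ blocks already match $A$; the condition $AA^{\core_m}A=A$ is therefore equivalent to the equality of the $(1,2)$ blocks, which after the computation reduces to a relation of the form $T^{-1}S\cdot N + (\text{something involving }\tilde T_m,\,P_{N^m},\,N)=S$, together with the vanishing of the $(2,2)$ block $N$ (since the bottom-right block of $AA^{\core_m}A$ is $0$). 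The block $N$ in position $(2,2)$ of $A$ must equal the $(2,2)$ block of $AA^{\core_m}A$, which is $0$; hence $N=0$, i.e. $\ind(A)\le 1$ by Remark \ref{GM}.

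More precisely, I would argue the two directions separately. For the \emph{if} direction: if $\ind(A)\le 1$ then $N=0$ and $S$ is absent (or $A=T$ up to the unitary block structure), so $A^{\core_m}$ coincides with the core-EP inverse $A^{\odagger}$ by Remark \ref{rem 2}(ii), which in turn coincides with the core inverse $A^{\core}$ by Remark \ref{rem 3}; and the core inverse is always an inner inverse, so $A^{\core_m}\in A\{1\}$. Alternatively, one checks directly that $\begin{bmatrix}T & 0\\0 & 0\end{bmatrix}\begin{bmatrix}T^{-1} & 0\\0 & 0\end{bmatrix}\begin{bmatrix}T & 0\\0 & 0\end{bmatrix}=\begin{bmatrix}T & 0\\0 & 0\end{bmatrix}$.

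For the \emph{only if} direction: assume $A^{\core_m}\in A\{1\}$, i.e. $AA^{\core_m}A=A$. Using the block forms above, the $(2,2)$ entry of $AA^{\core_m}A$ is $0$ (because both factors of $A^{\core_m}$'s block matrix have zero bottom row, the product $AA^{\core_m}$ has zero bottom row, hence so does $AA^{\core_m}A$), while the $(2,2)$ entry of $A$ is $N$. Thus $N=0$, and by Remark \ref{GM} this forces $k=\ind(A)\le 1$.

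The main obstacle — though it is more bookkeeping than genuine difficulty — is to confirm that once $N=0$ is known, nothing else obstructs: when $N=0$ we must check the $(1,2)$ block equality still holds, but with $N=0$ we have $\tilde T_m=\sum_{i=0}^{m-1}T^iSN^{m-1-i}$, all of whose terms contain a positive power of $N$ except possibly the last; for $m\ge 2$ every term vanishes and the $(1,2)$ blocks trivially agree, while for $m=1$ one has $\tilde T_1=S$ but then the reduced matrix $A=T$ has no second block at all. In every case the inner-inverse identity holds. Hence $A^{\core_m}\in A\{1\}\iff\ind(A)\le 1$.
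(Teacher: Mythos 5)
Your proof is correct and follows essentially the same route as the paper: compute $AA^{\core_m}A$ in the core-EP decomposition using \eqref{canonical m-weak core}, observe that the $(2,2)$ block forces $N=0$, and conclude via Remark \ref{GM}. One minor slip: when $\ind(A)=1$ the blocks $S$ and $N$ are still present in \eqref{core EP decomposition} (only $\ind(A)=0$ makes them absent), but this is harmless since the offending $(1,2)$ term $T^{-m}\tilde{T}_m P_{N^m}N$ carries a right factor of $N$ and vanishes once $N=0$.
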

\begin{proof} From \eqref{core EP decomposition} and \eqref{canonical m-weak core} we get
\[AA^{\core_m}A=A \Leftrightarrow
\begin{bmatrix}
    T & ~S+T^{-m}\tilde{T}_m P_{N^m}N \\
    0 & 0
  \end{bmatrix}=\begin{bmatrix}
                   T & S \\
                   0 & N \end{bmatrix} \Leftrightarrow N=0.\]
Thus,  Remark \ref{GM} completes the proof.
\end{proof}

\section{More properties and representations of the $m$-weak core inverse}
In this section we derive some more properties of the $m$-weak core inverse. Also, we investigate the most general representation of this new inverse.

\begin{theorem}\label{properties 3} Let $A\in \Cnn$, $k=\ind(A)$, and $m\in \mathbb{N}$.
Then
\begin{enumerate}[\rm(a)]
\item $AA^{\core_m}A=A^{\weak_{m-1}}A^{m+1}(A^m)^\dag A$.
\item $AA^{\core_m}A^{m+1}= A^{\weak_{m-1}}A^{m+1}$.
\item $A^{\core_m}A^m=A^{\weak_m}A^m$.
\item If $k<2m$, then $A^{\core_m}A^m=A^{d}A^m$.
\item $A^m A^{\core_m}=A^{\odagger}A^mP_{A^m}$.
\item If $A^m$ is EP then $A^m A^{\core_m}=A^{\odagger}A^m$.
\item $A^m A^{\core_m}A^m=A^{\odagger}A^{2m}$
\item $A^{m+1} A^{\core_m}A^m=P_{A^l}A^{2m}$, for $l\ge k$.
\item If $k<2m$ then $A^{m+1} A^{\core_m}A^m=A^{2m}$.
\end{enumerate}
\end{theorem}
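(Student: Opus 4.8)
The plan is to obtain all nine equalities by chaining together the representations already established in Theorems~\ref{properties 1} and~\ref{properties 2}, with the help of a short toolkit of elementary facts that I would record first: $P_{A^m}=A^m(A^m)^\dag$ and $A^m(A^m)^\dag A^m=A^m$; $P_{A^m}A^j=A^j$ for every $j\ge m$, since $\Ra(A^j)\subseteq\Ra(A^m)$; $A^m(A^{\odagger})^{m+1}=A^{\odagger}$ and $AA^{\odagger}=P_{A^l}$ for every $l\ge k$, both immediate from the canonical forms \eqref{core EP decomposition}, \eqref{core EP representation}, \eqref{core EP Am} and \eqref{projector} (the last one just recording that $AA^{\odagger}$ is the orthogonal projector onto $\Ra(A^k)$); and the two Drazin facts $(A^d)^{m+1}P_{A^l}=(A^{\odagger})^{m+1}$ for $l\ge k$, which is precisely \eqref{m+1 dagger}, and $(A^dA)^{m+1}=A^dA$, which is the idempotency of $A^dA=AA^d$. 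With this in hand, item (c) needs nothing, being literally the first equality in Theorem~\ref{properties 1}(e).

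For (a) and (b) I would start from $AA^{\core_m}=A^{\weak_{m-1}}AP_{A^m}$, which is Theorem~\ref{properties 1}(c). Right–multiplying by $A$ and substituting $P_{A^m}=A^m(A^m)^\dag$ gives $AA^{\core_m}A=A^{\weak_{m-1}}A\,A^m(A^m)^\dag A=A^{\weak_{m-1}}A^{m+1}(A^m)^\dag A$, which is (a); right–multiplying instead by $A^m$ and collapsing $P_{A^m}A^m=A^m$ gives $AA^{\core_m}A^m=A^{\weak_{m-1}}A\cdot A^m=A^{\weak_{m-1}}A^{m+1}$, which is (b). For (e)--(i) I would chain as follows. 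By Theorem~\ref{properties 1}(a) and $A^m(A^{\odagger})^{m+1}=A^{\odagger}$, $A^mA^{\core_m}=A^m(A^{\odagger})^{m+1}A^mP_{A^m}=A^{\odagger}A^mP_{A^m}$, which is (e). Right–multiplying (e) by $A^m$ and using $P_{A^m}A^m=A^m$ gives (g); replacing $P_{A^m}$ by $Q_{A^m}=(A^m)^\dag A^m$ in (e) when $A^m$ is EP and collapsing $A^m(A^m)^\dag A^m=A^m$ gives (f). Left–multiplying (g) by $A$ and using $AA^{\odagger}=P_{A^l}$ gives (h); and specialising (h) to $l=2m$, which is legitimate because $k<2m$ forces $2m\ge k$, gives $A^{m+1}A^{\core_m}A^m=P_{A^{2m}}A^{2m}=A^{2m}$, which is (i).

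The one item needing a slightly longer chain is (d): by (c) and $A^{\weak_m}=(A^{\odagger})^{m+1}A^m$ we have $A^{\core_m}A^m=(A^{\odagger})^{m+1}A^{2m}$; rewriting $(A^{\odagger})^{m+1}=(A^d)^{m+1}P_{A^k}$ via \eqref{m+1 dagger} and using $P_{A^k}A^{2m}=A^{2m}$ (once more because $k<2m$ gives $2m\ge k$) turns this into $(A^d)^{m+1}A^{2m}=(A^d)^{m+1}A^{m+1}\,A^{m-1}=(A^dA)^{m+1}A^{m-1}=A^dA\cdot A^{m-1}=A^dA^m$. I do not anticipate any genuine conceptual obstacle: the work is organisational, and the step most easily mishandled is (a), where $(A^m)^\dag$ appears on the right although the left–hand side carries no Moore--Penrose inverse, so one must notice that it enters only through $P_{A^m}=A^m(A^m)^\dag$. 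As a cross–check I would also verify each identity directly by feeding the canonical forms \eqref{canonical m-weak core}, \eqref{core EP Am}, \eqref{P_Am and Q_Am} and \eqref{projector} into both sides, the sole nilpotent manipulation needed being $P_{N^m}N^m=N^m$; running that check on (d) and (i) in particular pinpoints exactly where the hypothesis $k<2m$ is used.
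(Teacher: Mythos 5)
Your proposal is correct in substance and follows essentially the same route as the paper: everything is obtained by chaining the identities of Theorem~\ref{properties 1}(a),(c),(e) and Theorem~\ref{properties 2}(e) with the elementary facts $P_{A^m}=A^m(A^m)^\dag$, $P_{A^m}A^j=A^j$ for $j\ge m$, $A^m(A^{\odagger})^{m+1}=A^{\odagger}$, $AA^{\odagger}=P_{A^l}$ for $l\ge k$, and \eqref{m+1 dagger}. Items (a), (c), (d), (e), (f), (g), (h), (i) are all handled exactly as in the paper (your route to (f) via $P_{A^m}=Q_{A^m}$ is a cosmetic variant of the paper's appeal to Remark~\ref{rem 2}(iii)), and your use of $k<2m$ in (d) and (i) is precisely where the paper uses it.

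The one point to fix is item (b). Your chain gives $AA^{\core_m}A^{m}=A^{\weak_{m-1}}AP_{A^m}A^m=A^{\weak_{m-1}}A^{m+1}$, and you then assert this ``is (b)''; but the statement of (b) has $A^{m+1}$, not $A^m$, on the left. As literally written, (b) is false: for invertible $A$ one has $A^{\core_m}=A^{-1}$, so $AA^{\core_m}A^{m+1}=A^{m+1}$ while $A^{\weak_{m-1}}A^{m+1}=A^{-1}A^{m+1}=A^m$. So the printed statement carries a typo and the identity you proved, $AA^{\core_m}A^{m}=A^{\weak_{m-1}}A^{m+1}$, is the one that genuinely ``follows easily from (a)'' (right-multiply (a) by $A^{m-1}$ and use $(A^m)^\dag A^m A^{m-1}\cdot A = Q_{A^m}A^m\cdot\text{(nothing)}$, i.e.\ $A^{m+1}(A^m)^\dag A^m = A^{m+1}$). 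This is not a flaw in your reasoning, but you should have flagged the mismatch between what you derived and what the theorem asserts rather than silently relabelling your identity as (b).
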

\begin{proof}
(a) Immediately follows from Theorem \ref{properties 1} (b) and the fact that $P_{A^m}=A^m(A^m)^\dag$.\\
(b) Follows easily from (a).\\
(c) By definition of the $m$-weak core inverse.\\
(d) By Theorem \ref{properties 2} (e) we know that $A^{\core_m}=(A^d)^{m+1} P_{A^k} A^m P_{A^m}$. Thus, as $k<2m$ we have
\begin{align*}
A^{\core_m}A^m &= (A^d)^{m+1}P_{A^k} A^{2m}\\
&=(A^d)^{m+1}A^{2m}\\
&=A^dA^m.
\end{align*}
(e) From Theorem \ref{properties 1} (a) we know that
$A^{\core_m}= (A^{\odagger})^{m+1} A^m P_{A^m}$. Also, as $A^{\odagger}=A(A^{\odagger})^2$, by applying induction on $m$ it is easy to see
\begin{equation}\label{property core EP}
A^m(A^{\odagger})^{m+1}=A^{m-1}(A^{\odagger})^m=\cdots= A^{\odagger}.
\end{equation}
Thus, the assertion follows from \eqref{property core EP}. \\
(f) By Remark \ref{rem 2} (iii) we know that if $A^m$ is EP then $A^{\core_m}=A^{\weak_m}$. Thus, \eqref{m-WG} implies
\begin{equation}\label{Am by m-weak}
A^m A^{\core_m}=A^m(A^{\odagger})^{m+1}A^m.
\end{equation}
So, \eqref{property core EP} and \eqref{Am by m-weak}  imply  $A^m A^{\core_m}=A^{\odagger}A^m$. \\
(g) Follows directly from (e).\\
(h) By applying (g) we have $A^{m+1} A^{\core_m}A^m=AA^{\odagger}A^m$. Now, the statement follows of the fact that $AA^{\odagger}=P_{A^l}$ for $l\ge k$. \\
(i) From Lemma \ref{projector}, $P_{A^k}=P_{A^l}$ for $l\ge k$. So, as $k<2m$ we have $P_{A^k}A^{2m}=A^{2m}$. Now, the affirmation it is clear from (h).
\end{proof}

Maximal classes of matrices for which general expressions of the $m$-weak core inverse holds, are considered now.

\begin{theorem} Let $A\in \Cnn$ be a matrix of index $k$ written as in (\ref{core EP decomposition}). Let $t=\rank(A^k)$, $G \in \Cnn$, $H \in A^m\{1\}$, and $m\in \mathbb{N}$. Then the following are equivalent:
\begin{enumerate}[\rm(a)]
\item $A^{\core_m}=G A^m H$;
\item $GA^m=A^{\weak_m} A^m$ and $A^{\weak_m}A^m H=A^{\weak_m}P_{A^m}$;
\item $G=A^{\weak_m} + Z (I_n-P_{A^m})$, \,  $H=(A^m)^\dag +(I_n-Q_{A^m}) W$ for arbitrary $Z, W \in {\mathbb C}^{n \times n}$;
\item $G$ and $H$ can be expressed as
\[
G=U \left[
\begin{array}{cc}
T^{-1} & ~T^{-(m+1)}\tilde{T}_mP_{N^m} + Z_{12}(I_{n-t}-P_{N^m}) \\
0 & Z_{22}(I_{n-t}-P_{N^m})
\end{array}\right]U^*,\]
\[
H=U\left[
\begin{array}{cc}
T^{-m}(I_t-M H_{21}) & ~T^{-m}[T^{-m} \tilde{T}_mP_{N^m}-MH_{22}] \\
H_{21} & H_{22}
\end{array}\right]U^*,
\]
where $\tilde{T}_m$ is as in (\ref{core EP Am}), $M:=\tilde{T}_m+T^{-m}\tilde{T}_mN^m$,  $Z_{12}\in \mathbb{C}^{t\times(n-t)}$, $H_{21}\in \mathbb{C}^{(n-t)\times t}$, and $Z_{22},H_{22}\in \mathbb{C}^{(n-t)\times(n-t)}$.
\item $\Ra(GA^m)\subseteq\Ra(A^k)$, $\Nu((A^m)^*)\subseteq \Nu(A^{\weak_m}A^m H)$ and $A^m G A^m=A^{\odagger} A^{2m}$;
\end{enumerate}
\end{theorem}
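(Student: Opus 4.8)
The plan is to anchor the argument on the equivalence (a)~$\Leftrightarrow$~(b) and then establish, one by one, (a)~$\Leftrightarrow$~(b), (b)~$\Leftrightarrow$~(c), (c)~$\Leftrightarrow$~(d) and (b)~$\Leftrightarrow$~(e); since (b) occurs in every link this delivers all the stated equivalences. The link (a)~$\Leftrightarrow$~(b) is purely algebraic. Assuming $A^{\core_m}=GA^mH$, I would right-multiply by $A^m$ and use $A^mHA^m=A^m$ (valid because $H\in A^m\{1\}$) together with $A^{\core_m}A^m=A^{\weak_m}A^m$ from Theorem~\ref{properties 1}(e) to get $GA^m=A^{\weak_m}A^m$; feeding this back into $A^{\core_m}=(GA^m)H$ and recalling $A^{\core_m}=A^{\weak_m}P_{A^m}$ then yields $A^{\weak_m}A^mH=A^{\weak_m}P_{A^m}$. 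The converse is the same chain of equalities read in reverse.

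For (b)~$\Leftrightarrow$~(c) I would solve the two equations of (b) in turn. The general solution of $GA^m=A^{\weak_m}A^m$ is $G=A^{\weak_m}+Z(I_n-P_{A^m})$: if $G$ has this shape then $GA^m=A^{\weak_m}A^m+Z(A^m-P_{A^m}A^m)=A^{\weak_m}A^m$ because $P_{A^m}A^m=A^m$, and conversely $Z:=G-A^{\weak_m}$ satisfies $ZP_{A^m}=ZA^m(A^m)^\dag=0$, whence $Z=Z(I_n-P_{A^m})$. For the $H$-equation, since $A^mQ_{A^m}=A^m$, every $H=(A^m)^\dag+(I_n-Q_{A^m})W$ satisfies $A^{\weak_m}A^mH=A^{\weak_m}A^m(A^m)^\dag=A^{\weak_m}P_{A^m}$; for the converse I would write a generic inner inverse of $A^m$ as $(A^m)^\dag+(I_n-Q_{A^m})K+L(I_n-P_{A^m})$ and verify that imposing $A^{\weak_m}A^mH=A^{\weak_m}P_{A^m}$ removes the left-hand freedom, leaving a representative of the claimed form.

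The step (c)~$\Leftrightarrow$~(d) is the computational core: I would plug the core-EP decomposition \eqref{core EP decomposition} and the block representations \eqref{core EP representation}, \eqref{core EP Am}, \eqref{MP of Am}, \eqref{P_Am and Q_Am} (and \eqref{canonical m-weak core} for $A^{\core_m}$) into the expressions of (c), partition $U^*ZU$ and $U^*WU$ into $2\times2$ blocks, and carry out the multiplications, repeatedly using $P_{N^m}N^m=N^m$ and $(I_{n-t}-P_{N^m})N^m=0$ to collapse terms and recognising the combination $M=\tilde{T}_m+T^{-m}\tilde{T}_mN^m$; this should reproduce the block matrices displayed in (d), after absorbing $T^{-(m+1)}\tilde{T}_m(I_{n-t}-P_{N^m})$ into the free block $Z_{12}(I_{n-t}-P_{N^m})$, and conversely the block forms of (d) are read back as instances of (c). For (b)~$\Leftrightarrow$~(e): from (b) one has $\Ra(GA^m)=\Ra(A^{\weak_m}A^m)\subseteq\Ra(A^{\odagger})=\Ra(A^k)$ by \eqref{def core EP}, $A^mGA^m=A^mA^{\weak_m}A^m=A^{\odagger}A^{2m}$ using $A^mA^{\weak_m}=A^{\odagger}A^m$ from \eqref{property core EP}, and $\Nu((A^m)^*)=\Nu(P_{A^m})\subseteq\Nu(A^{\weak_m}P_{A^m})=\Nu(A^{\weak_m}A^mH)$. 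For the reverse implication I would exploit that $\ind(A)=k$ forces $\Ra(A^k)\cap\Nu(A^k)=\{0\}$: since $GA^m$ and $A^{\weak_m}A^m$ both have range inside $\Ra(A^k)$ and both satisfy $A^m(\cdot)=A^{\odagger}A^{2m}$, their difference has range in $\Ra(A^k)\cap\Nu(A^m)\subseteq\Ra(A^k)\cap\Nu(A^k)=\{0\}$, giving $GA^m=A^{\weak_m}A^m$; and $\Nu((A^m)^*)=\Ra(I_n-P_{A^m})\subseteq\Nu(A^{\weak_m}A^mH)$ gives $A^{\weak_m}A^mH=A^{\weak_m}A^mHP_{A^m}=A^{\weak_m}(A^mHA^m)(A^m)^\dag=A^{\weak_m}P_{A^m}$.

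I expect the only real difficulty to be the block bookkeeping in (c)~$\Leftrightarrow$~(d): this is the one place where the full Moore--Penrose block formula \eqref{MP of Am} and the projectors $P_{N^m}$, $Q_{N^m}$ all appear together, and one must be careful about which blocks remain genuinely free and about keeping the $H$-block consistent with membership in $A^m\{1\}$. A secondary delicate point is the ``only if'' half of the $H$-parametrisation in (b)~$\Leftrightarrow$~(c), where the left null-space freedom of an inner inverse of $A^m$ must be matched precisely against the term $(I_n-Q_{A^m})W$.
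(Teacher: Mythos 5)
Your overall architecture --- routing every link through (b) --- is the same as the paper's, and the pieces (a)$\Leftrightarrow$(b), the forward halves of (b)$\Rightarrow$(c) and (b)$\Rightarrow$(e), and the block computation behind (d) go through essentially as you describe and as the paper does them. One step is genuinely different: for the first equation in (e)$\Rightarrow$(b) the paper writes $GA^m=A^kQ$ and substitutes through $A^{\weak_m}A^{k+1}=A^k$ and $A^mGA^m=A^{\odagger}A^{2m}$, whereas you note that $GA^m-A^{\weak_m}A^m$ has range in $\Ra(A^k)$ and is annihilated by $A^m$, hence by $A^k$ (null spaces grow to $\Nu(A^k)$), so it vanishes because $\Ra(A^k)\cap\Nu(A^k)=\{0\}$. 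That argument is correct and cleaner than the paper's.

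The genuine gap is precisely the point you flag as ``secondary delicate'': the converse of the $H$-parametrisation in (b)$\Rightarrow$(c). Writing a general inner inverse as $H=(A^m)^\dag+(I_n-Q_{A^m})K+L(I_n-P_{A^m})$, the condition $A^{\weak_m}A^mH=A^{\weak_m}P_{A^m}$ reduces to $A^{\weak_m}A^m\,L(I_n-P_{A^m})=0$, and this does \emph{not} remove the left-hand freedom: it only forces the columns of $L(I_n-P_{A^m})$ into $\Nu(A^{\weak_m}A^m)$, which strictly contains $\Nu(A^m)$ whenever $m<k$, because $\rank(A^{\weak_m}A^m)=\rank(A^k)<\rank(A^m)$ as soon as $N^m\neq 0$. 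Concretely, take
\[
A=\begin{bmatrix}1&0&0\\0&0&1\\0&0&0\end{bmatrix},\qquad m=1,\qquad G=A^{\weak}=\begin{bmatrix}1&0&0\\0&0&0\\0&0&0\end{bmatrix},\qquad H=\begin{bmatrix}1&0&0\\0&0&0\\0&1&1\end{bmatrix}.
\]
Here $k=2$, $AHA=A$, $GA=A^{\weak}A$ and $A^{\weak}AH=A^{\weak}P_A=A^{\core_1}=GAH$, so (a) and (b) hold; yet every matrix $A^\dag+(I_3-Q_A)W$ has third row $(0,1,0)$, so $H$ is not of the form claimed in (c). The paper's own proof invokes the Ben-Israel--Greville general-solution formula at the same spot with the same imprecision (the projector that actually appears is $Q_{A^{\weak_m}A^m}$, not $Q_{A^m}$), so you have not introduced a new error --- but your proposed repair does not close the hole, and as stated the implication (b)$\Rightarrow$(c) requires either the restriction $m\ge\ind(A)$ (where $\Nu(A^{\weak_m}A^m)=\Nu(A^m)$ and the two projectors agree) or a corrected normal form for $H$.
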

\begin{proof}
(a) $\Rightarrow$ (b) As  $A^{\core_m}=G A^m H$, from \eqref{m-weak core} and the fact that $A^mHA^m=A^m$ we have
\[GA^m=(GA^mH)A^m=A^{\core_m}A^m=A^{\weak_m}P_{A^m}A^m = A^{\weak_m}A^m,\]
which yields
\[A^{\weak_m}A^mH=GA^mH=A^{\core_m}=A^{\weak_m}P_{A^m}.\]
(b) $\Rightarrow$ (c) According to \cite[p. 52]{BeGr}, it is easy to see that all solutions of equation $GA^m=A^{\weak_m} A^m$ are given by
\[G=A^{\weak_m} + Z (I_n-P_{A^m}), \quad \text{for arbitrary}\quad Z \in {\mathbb C}^{n \times n}.\]
Analogously, the general solution of the equation  $A^{\weak_m}A^m H=A^{\weak_m}P_{A^m}$ is 
\[H=(A^m)^\dag +(I_n-Q_{A^m}) W, \quad \text{ for arbitrary} \quad  W \in \Cnn.
\]
(c) $\Rightarrow$ (a) We note that $A^m=P_{A^m} A^m=A^mQ_{A^m}$. So,
\begin{eqnarray*}
G A^m H &=&  [A^{\weak_m} + Z (I_n-P_{A^m})] A^m \left[(A^m)^\dag +(I_n-Q_{A^m}) W\right] \\
     &=&  A^{\weak_m}  A^m \left[(A^m)^\dag  + (I_n-Q_{A^m}) W\right] \\
     &= & A^{\weak_m}  P_{A^m}  \\
     &=&  A^{\core_m},
\end{eqnarray*}
where the last equality follows from \eqref{m-weak core}.
\\ (a) $\Rightarrow$ (d)
Since $A$ is written as in (\ref{core EP decomposition}), we partition
\[ G =
U \left[
\begin{array}{cc}
G_{11} & G_{12} \\
G_{21} & G_{22}
\end{array}\right]U^* \quad \text{and}\quad H =
U \left[
\begin{array}{cc}
H_{11} & H_{12} \\
H_{21} & H_{22}
\end{array}\right]U^*,\]
according to the sizes of the partition of $A$. \\
Since (a) $\Rightarrow$ (c) is valid, we have 
\begin{equation}\label{G}
G = A^{\weak_m} + Z (I_n-P_{A^m}), \quad \text{ for arbitrary } Z \in \mathbb{C}^n.
\end{equation}
From \eqref{P_Am and Q_Am} for $\ell=m$, we get
\[
P_{A^m}= U\left[\begin{array}{cc}
I_t & 0 \\
0 & P_{N^m}
\end{array}\right]U^*.
\]
Now, by making the following partition with blocks of adequate sizes
$$
Z =
U \left[
\begin{array}{cc}
Z_{11} & Z_{12} \\
Z_{21} & Z_{22}
\end{array}\right]U^*.
$$
From (\ref{canonical m-weak core}) and (\ref{G}) we have
\begin{eqnarray*}
U^* G U
&=& \left[\begin{array}{cc}
T^{-1} & T^{-(m+1)} \tilde{T}_m P_{N^m} \\
0 & 0
\end{array}\right] + \left[
\begin{array}{cc}
Z_{11} & Z_{12} \\
Z_{21} & Z_{22}
\end{array}\right]
\left[\begin{array}{cc}
0 & 0 \\
0 & I_{n-t}-P_{N^m}
\end{array}\right]\\
& = &
\left[
\begin{array}{cc}
T^{-1} & T^{-(m+1)} \tilde{T}_mP_{N^m} + Z_{12}(I_{n-t}-P_{N^m}) \\
0 & Z_{22}(I_{n-t}-P_{N^m})
\end{array}\right].
\end{eqnarray*}
On the other hand, since (a) $\Rightarrow$ (b) has been proved, we can deduce that
$A^{\weak_m}A^m H=A^{\weak_m}P_{A^m}$ holds.
In consequence, by comparing their blocks
we arrive at
\[T^{m-1} H_{11}+(T^{-1}\tilde{T}_m+T^{-(m+1)}\tilde{T}_m N^m) H_{21}=T^{-1}\]
 and
\[T^{m-1} H_{12}+(T^{-1}\tilde{T}_m+T^{-(m+1)}\tilde{T}_m N^m) H_{22}=T^{-(m+1)}\tilde{T}_m P_{N^m}.\] Thus, (d) is derived.
\\ (d) $\Rightarrow$ (a)  By taking $Z_{12}=0$, $H_{21}=0$,  and $Z_{22}=H_{22}=0$, an easy computation shows  \begin{eqnarray*}
G A^m H &=& U\left[
\begin{array}{cc}
T^{-1} & T^{-(m+1)} \tilde{T}_mP_{N^m} \\
0 & 0
\end{array}\right]\left[
\begin{array}{cc}
T^m &  \tilde{T}_m  \\
0 & N^m
\end{array}\right]\left[
\begin{array}{cc}
T^{-m} & T^{-(2m+1)} \tilde{T}_mP_{N^m}  \\
0 & 0
\end{array}\right]U^*\\
&=& U\left[
\begin{array}{cc}
T^{-1} & T^{-(m+1)} \tilde{T}_mP_{N^m} \\
0 & 0
\end{array}\right]\\
&=& A^{\core_m}.
\end{eqnarray*}
(b) $\Rightarrow$ (e) Clearly, $GA^m=A^{\weak_m} A^m$ yields $\Ra(GA^m)\subseteq \Ra(A^{\weak_m})=\Ra(A^k)$. Moreover, as $A^m(A^{\odagger})^{m+1}=A^{\odagger}$, the condition $GA^m=A^{\weak_m} A^m$ also implies  \[A^mGA^m=A^m A^{\weak_m} A^m A^m=A^m(A^{\odagger})^{m+1}A^{2m}=A^{\odagger}A^{2m}.\]
On the other hand, as $A^{\weak_m}A^m H=A^{\weak_m}P_{A^m}$  we get
\[\Nu((A^m)^*)=\Nu((A^m)^\dag)\subseteq\Nu(P_{A^m})\subseteq \Nu(A^{\weak_m}P_{A^m})=\Nu(A^{\weak_m}A^m H).\]
(e) $\Rightarrow$ (b) By $\Ra(GA^m)\subseteq\Ra(A^k)$ we have $GA^m=A^kQ$ for some matrix $Q$. We know that $A^{\weak_m}A^{k+1}=A^k$ and $A^{\weak_m}=(A^{\odagger})^{m+1}A^m$. In consequence, the hypothesis $A^mGA^m=A^{\odagger}A^{2m}$ implies
\begin{eqnarray*}
GA^m &=& A^{\weak_m}A^{k+1} Q \\
&=& (A^{\odagger})^{m+1}A A^m G A^m \\
&=& (A^{\odagger})^{m+1}A A^{\odagger} A^{2m}.
\end{eqnarray*}
Now, by Theorem \ref{properties 3} (g) we know that $A^m A^{\core_m}A^m=A^{\odagger}A^{2m}$. Also, from Theorem \ref{properties 2} (a) we have that $A^{\core_m}AA^{\core_m}=A^{\core_m}$. Therefore,
\begin{eqnarray*}
GA^m &=& (A^{\odagger})^{m+1}A [A^{\odagger} A^{2m}] \\
&=& (A^{\odagger})^{m+1}A A^m A^{\core_m}A^m \\
&=& [(A^{\odagger})^{m+1} A^m] A A^{\core_m}A^m \\
&=& A^{\core_m}AA^{\core_m} A^m \\
&=& A^{\core_m}A^m.
\end{eqnarray*}
It is clear that $\Nu((A^m)^*)=\Ra(I_n-P_{A^m})$. So, $\Nu((A^m)^*)\subseteq \Nu(A^{\weak_m}A^m H)$ is equivalent to
$\Ra(I_n-P_{A^m})\subseteq \Nu(A^{\weak_m}A^m H)$ which in turn is equivalent to \[A^{\weak_m}A^m H (I_n-P_{A^m})=0.\] In consequence, as $A^mHA^m=A^m$ we obtain \[A^{\weak_m}A^m H=A^{\weak_m}A^m H A^m (A^m)^\dag=A^{\weak_m}P_{A^m}.\] This completes the implication.
\end{proof}

\section{The $m$-weak core inverse and the Hartwig-Spindelb\"ock decomposition}
In this section we get an alternative way of how to compute the $m$-weak core inverse of $A$.

For any matrix $A\in \Cnn$ of rank $r>0$, the Hartwig-Spindelb\"ock decomposition is given by
\begin{equation} \label{HS}
 A = U\left[\begin{array}{cc}
\Sigma K & \Sigma L \\
0 & 0
\end{array}\right]U^*,
\end{equation}
where $U\in \Cnn$ is unitary, $\Sigma=\text{diag} (\sigma_1 I_{r_1},\sigma_2 I_{r_2},\dots, \sigma_t I_{r_t})$ is a diagonal matrix, the diagonal entries $\sigma_i$ being singular values of $A$, $\sigma_1>\sigma_2>\cdots>\sigma_t>0,$
$r_1+r_2+\cdots+r_t=r$, and $K\in \mathbb{C}^{r\times r}$, $L\in \mathbb{C}^{r\times(n-r)}$ satisfy
\begin{equation}\label{KK*+LL*}
KK^*+LL^*=I_r.
\end{equation}
If $A$ is of the form (\ref{HS}), from \cite[Formula (10)]{FeLeTh1} we know that its core-EP inverse  is as follows
\begin{equation}\label{core EP canonical hartwig}
A^{\odagger}=U\left[\begin{array}{cc}
(\Sigma K)^{\odagger}  & 0 \\
0 & 0
\end{array}\right]U^*.
\end{equation}

Next, we present another representation of the $m$-weak core inverse by using the Hartwig-Spindelböck decomposition.

\begin{theorem} \label{canonical m-weak hartwig}
Let $A\in \Cnn$ be a matrix  written as in (\ref{HS}).
Then
\begin{equation}\label{hartwig m-weak core}
A^{\core_m}=U\left[\begin{array}{cc}
(\Sigma K)^{\weak_m} P_{(\Sigma K)^{m-1}} & 0 \\
0 & 0
\end{array}\right]U^*.\end{equation}
\end{theorem}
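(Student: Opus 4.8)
The plan is to deduce the Hartwig--Spindelb\"ock representation from the core-EP one already established in Theorem \ref{thm canonical m-weak core}, via a compatible block structure for both decompositions. First I would invoke Theorem \ref{properties 1}(a), which gives $A^{\core_m}=(A^{\odagger})^{m+1}A^m P_{A^m}$, and observe that this formula refers only to $A^{\odagger}$, powers of $A$, and an orthogonal projector onto a column space; all three transform functorially under $A=U\left[\begin{smallmatrix}\Sigma K & \Sigma L\\ 0 & 0\end{smallmatrix}\right]U^*$. Writing $B:=\Sigma K\in\mathbb{C}^{r\times r}$, the key structural fact is that for every $\ell\ge 1$ one has $A^\ell = U\left[\begin{smallmatrix} B^\ell & B^{\ell-1}\Sigma L\\ 0 & 0\end{smallmatrix}\right]U^*$, so $\Ra(A^\ell)=\Ra\!\left(U\left[\begin{smallmatrix} B^{\ell-1}[B \ \Sigma L]\\ 0\end{smallmatrix}\right]\right)$, and using $KK^*+LL^*=I_r$ one checks $\Ra([B\ \Sigma L])=\Ra(\Sigma)=\mathbb{C}^r$, whence $\Ra(A^\ell)=\Ra\!\left(U\left[\begin{smallmatrix}B^{\ell-1}\\ 0\end{smallmatrix}\right]\right)$ and therefore $P_{A^\ell}=U\left[\begin{smallmatrix} P_{B^{\ell-1}} & 0\\ 0 & 0\end{smallmatrix}\right]U^*$.

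With that in hand the computation reduces to the top-left $r\times r$ block. From \eqref{core EP canonical hartwig} we have $A^{\odagger}=U\left[\begin{smallmatrix} B^{\odagger} & 0\\ 0 & 0\end{smallmatrix}\right]U^*$, hence $(A^{\odagger})^{m+1}=U\left[\begin{smallmatrix} (B^{\odagger})^{m+1} & 0\\ 0 & 0\end{smallmatrix}\right]U^*$; multiplying by $A^m=U\left[\begin{smallmatrix} B^m & B^{m-1}\Sigma L\\ 0 & 0\end{smallmatrix}\right]U^*$ and then by $P_{A^m}=U\left[\begin{smallmatrix} P_{B^{m-1}} & 0\\ 0 & 0\end{smallmatrix}\right]U^*$ collapses everything into the $(1,1)$ block $(B^{\odagger})^{m+1}B^m P_{B^{m-1}}$, while all other blocks vanish because the bottom row of each factor is zero. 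Finally I would recognize $(B^{\odagger})^{m+1}B^m = (B^{\odagger})^{m+1}B^{(m-1)+1}$ as exactly the expression $(B^{\odagger})^{(m-1)+1+1}B^{m-1}\cdot B^{?}$\ldots\ more precisely, apply Theorem \ref{properties 1}(a) \emph{to the matrix $B$ with parameter $m-1$}: since $B$ has index at most one is \emph{not} assumed, but the identity $(C^{\odagger})^{j+1}C^j = C^{\weak_j}$ follows directly from \eqref{m-WG} with $j=m-1$ applied to $C=B$, and then Theorem \ref{thm canonical m-weak core} (or Definition \ref{def m-weak core}) for $B$ gives $B^{\core_{m-1}} = B^{\weak_{m-1}}P_{B^{m-1}} = (B^{\odagger})^{m}B^{m-1}P_{B^{m-1}}$. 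However the target formula has $(\Sigma K)^{\weak_m}P_{(\Sigma K)^{m-1}}$, i.e.\ exponent $m$ on the weak group inverse, so the correct identification is $(B^{\odagger})^{m+1}B^m P_{B^{m-1}} = \big[(B^{\odagger})^{m+1}B^m\big]P_{B^{m-1}} = B^{\weak_m}P_{B^{m-1}}$ using \eqref{m-WG}, namely $B^{\weak_m}=(B^{\odagger})^{m+1}B^m$. Substituting $B=\Sigma K$ yields precisely \eqref{hartwig m-weak core}.

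The main obstacle I anticipate is justifying $P_{A^\ell}=U\left[\begin{smallmatrix}P_{B^{\ell-1}}&0\\0&0\end{smallmatrix}\right]U^*$ cleanly, i.e.\ the claim $\Ra(A^\ell)=\Ra\!\left(U\left[\begin{smallmatrix}B^{\ell-1}\\0\end{smallmatrix}\right]\right)$ for $\ell\ge1$. This rests on $\Ra(B^{\ell-1}[B\ \Sigma L])=\Ra(B^{\ell-1})$ only once we know $[B\ \Sigma L]$ has full row rank $r$; that in turn follows because $[\Sigma K\ \Sigma L]=\Sigma[K\ L]$ and $[K\ L]$ has orthonormal rows by \eqref{KK*+LL*}, so $[K\ L]$ is right-invertible and $\Sigma$ is invertible. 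One must be slightly careful that $\Ra(B^{\ell-1}M)=\Ra(B^{\ell-1})$ when $M$ is right-invertible, which is immediate, and then that the unitary conjugation $U(\cdot)U^*$ preserves orthogonal projectors onto column spaces, which it does. Everything else is routine block multiplication, and I would present it as a single displayed chain of three matrix products followed by the identification via \eqref{m-WG}.
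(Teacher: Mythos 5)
Your proposal is correct and follows essentially the same route as the paper: start from $A^{\core_m}=(A^{\odagger})^{m+1}A^mP_{A^m}$, compute the blocks of $A^m$ and $P_{A^m}$ in the Hartwig--Spindelb\"ock basis, and identify $[(\Sigma K)^{\odagger}]^{m+1}(\Sigma K)^m$ with $(\Sigma K)^{\weak_m}$ via \eqref{m-WG}. The only (harmless) difference is that you justify $P_{A^m}=U\bigl[\begin{smallmatrix}P_{(\Sigma K)^{m-1}}&0\\0&0\end{smallmatrix}\bigr]U^*$ by a direct column-space argument using the right-invertibility of $[K\ L]$, whereas the paper computes $(A^m)^\dagger$ explicitly and cites the identity $P_R=P_{(\Sigma K)^{m-1}}$ from an earlier reference; your version is self-contained, though the brief detour through ``$B^{\core_{m-1}}$'' before settling on \eqref{m-WG} with exponent $m$ should be cut.
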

\begin{proof} By Theorem \ref{properties 1} (a) we know that  $A^{\core_m}=(A^{\odagger})^{m+1} A^m P_{A^m}$.  \\
From \eqref{HS} we have
\begin{equation}\label{A^m hartwig}
 A^m = U\left[\begin{array}{cc}
(\Sigma K)^m & (\Sigma K)^{m-1} \Sigma L \\
0 & 0
\end{array}\right]U^*.
\end{equation}
In consequence, we obtain
\[
 (A^m)^{\dagger} = U\left[\begin{array}{cc}
P^* R^{\dagger} & 0 \\
Q^* R^{\dagger} & 0
\end{array}\right]U^*,
\]
where $R=PP^*+QQ^*$, $P=(\Sigma K)^m$ and $Q=(\Sigma K)^{m-1} \Sigma L.$
This implies that
\begin{equation} \label{P_Am}
 P_{A^m}=A^m(A^m)^{\dagger} = U\left[\begin{array}{cc}
P_R & 0 \\
0 & 0
\end{array}\right]U^*.
\end{equation}
As in the proof of \cite[Theorem 3.2]{FeLeTh1}, an alternative expression for $P_R$ is given by
\begin{equation} \label{P_R}
  P_R=P_{(\Sigma K)^{m-1}}.
\end{equation}
Now, (\ref{P_Am}) and (\ref{P_R}) imply
\begin{equation} \label{P_Am 2}
 P_{A^m}=
U\left[\begin{array}{cc}
P_{(\Sigma K)^{m-1}} & 0 \\
0 & 0
\end{array}\right]U^*.
\end{equation}
So, from \eqref{core EP canonical hartwig}, \eqref{A^m hartwig}, and the fact that $A^{\core_m}=(A^{\odagger})^{m+1} A^m P_{A^m}$, we get
\[A^{\core_m}=U\left[\begin{array}{cc}
[(\Sigma K)^{\odagger}]^{m+1} (\Sigma K)^m P_{(\Sigma K)^{m-1}} & 0 \\
0 & 0
\end{array}\right]U^*.\]
Hence, from \eqref{m-WG} we deduce \eqref{hartwig m-weak core}.
\end{proof}

The previous result allows us to recover  representations of the core-EP and WC inverses obtained in \cite[Theorem 3.2]{FeLeTh1} and \cite[Theorem 3.18]{FeLePrTh}, respectively.

\begin{corollary} \label{canonical}
Let $A\in \Cnn$ be a matrix  written as in (\ref{HS}).
\begin{enumerate}[(a)]
\item If $m=1$ then
\begin{equation*}
 A^{\core_m}=A^{\weak,\dag}= U\left[\begin{array}{cc}
(\Sigma K)^{\weak} & 0 \\
0 & 0
\end{array}\right]U^*.
\end{equation*}
\item If $m\ge k$ then
\begin{equation*}
 A^{\core_m}=A^{\odagger}= U\left[\begin{array}{cc}
(\Sigma K)^{\odagger} & 0 \\
0 & 0
\end{array}\right]U^*.
\end{equation*}
\end{enumerate}
\end{corollary}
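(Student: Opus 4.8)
The plan is to derive both items as special cases of Theorem~\ref{canonical m-weak hartwig}, since that theorem already produces the block form
\[
A^{\core_m}=U\left[\begin{array}{cc}
(\Sigma K)^{\weak_m} P_{(\Sigma K)^{m-1}} & 0 \\
0 & 0
\end{array}\right]U^*,
\]
so all that remains in each case is to simplify the $(1,1)$ block. For part (a), set $m=1$: then $(\Sigma K)^{m-1}=(\Sigma K)^0=I_r$, hence $P_{(\Sigma K)^{m-1}}=P_{I_r}=I_r$, and the block collapses to $(\Sigma K)^{\weak_1}=(\Sigma K)^{\weak}$ by Remark~\ref{rem 1}. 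The identification $A^{\core_1}=A^{\weak,\dag}$ is already recorded in Remark~\ref{rem 2}~(i), so this half is essentially immediate once the $m=1$ substitution is made explicit.

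For part (b), take $m\ge k=\ind(A)$. Here I would argue that $P_{(\Sigma K)^{m-1}}$ acts as the identity on the relevant block, so that $(\Sigma K)^{\weak_m}P_{(\Sigma K)^{m-1}}$ reduces to $(\Sigma K)^{\weak_m}$, and then invoke Remark~\ref{rem 1} (the case $m\ge\ind$) together with the fact that $\ind(\Sigma K)$ is controlled by $\ind(A)$ to replace $(\Sigma K)^{\weak_m}$ by $(\Sigma K)^d$, and finally use the standard identity $(\Sigma K)^d=(\Sigma K)^{\odagger}$ valid when the Drazin and core-EP inverses agree—equivalently, appeal directly to Remark~\ref{rem 2}~(ii), which states $A^{\core_m}=A^{\odagger}$ for $m\ge k$, and combine it with the known Hartwig--Spindelb\"ock form \eqref{core EP canonical hartwig} of $A^{\odagger}$. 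Either route lands on the displayed formula.

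The step requiring the most care is the claim in part (b) that the projector factor $P_{(\Sigma K)^{m-1}}$ can be dropped, since $\Sigma K$ need not itself be nonsingular, so $(\Sigma K)^{m-1}$ need not have full rank and $P_{(\Sigma K)^{m-1}}$ need not be $I_r$. The clean way around this is \emph{not} to fight with $P_{(\Sigma K)^{m-1}}$ at all, but to bypass Theorem~\ref{canonical m-weak hartwig} for part (b) and instead combine Remark~\ref{rem 2}~(ii), $A^{\core_m}=A^{\odagger}$, with \eqref{core EP canonical hartwig}; this makes part (b) a one-line consequence. Part (a) can likewise be shortened by citing Remark~\ref{rem 2}~(i) and \cite[Theorem 3.18]{FeLePrTh} rather than re-deriving the block. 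I would present the proof in that economical form: two short paragraphs, one per item, each citing the relevant remark and the relevant earlier canonical form, with the $m=1$ versus $m\ge k$ substitution into \eqref{hartwig m-weak core} noted as a consistency check rather than as the main argument.
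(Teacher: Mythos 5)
Your proposal is correct, and part (a) is essentially the paper's argument. For part (b), however, you take a genuinely different route: you bypass the block formula \eqref{hartwig m-weak core} entirely and obtain the result from Remark \ref{rem 2}~(ii) together with the known representation \eqref{core EP canonical hartwig} of $A^{\odagger}$. That is a legitimate and shorter proof of the stated equality, and you were right to distrust the idea of simply dropping $P_{(\Sigma K)^{m-1}}$, since $\Sigma K$ need not be nonsingular. But the paper's proof stays inside Theorem \ref{canonical m-weak hartwig}, and the trick is not to discard the projector but to \emph{absorb} it: since $\ind(A)=k$ forces $\ind(\Sigma K)=k-1$ (this is \cite[Lemma 2.8]{MaTh}, the one ingredient your sketch does not supply), formula \eqref{projector} gives $P_{(\Sigma K)^{m-1}}=P_{(\Sigma K)^{k-1}}$ for $m\ge k$, Remark \ref{rem 1} gives $(\Sigma K)^{\weak_m}=(\Sigma K)^d$, and then \eqref{core-EP inverse} yields $(\Sigma K)^{\weak_m}P_{(\Sigma K)^{m-1}}=(\Sigma K)^d P_{(\Sigma K)^{k-1}}=(\Sigma K)^{\odagger}$. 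The paper's route is what actually delivers the corollary's advertised content --- that the new canonical form \eqref{hartwig m-weak core} specializes to the known representations of the WC and core-EP inverses --- whereas your version establishes the displayed identity without verifying that specialization; your proposed ``consistency check'' would, if written out, require exactly the index argument above.
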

\begin{proof}
(a)
Follows from Remark \ref{rem 1}. \\
(b) Since $m\ge k$,  by Remark \ref{rem 1} we have $(\Sigma K)^{\weak_m}=(\Sigma K)^d$. Moreover, as $A$ has index $k$ we know that $\Sigma K$ has index $k-1$ \cite[Lemma 2.8]{MaTh}. So, from Lemma \ref{projector} we get $P_{(\Sigma K)^{m-1}}=P_{(\Sigma K)^{k-1}}$ in our case. Thus, from \eqref{core-EP inverse} we obtain $(\Sigma K)^{\odagger}=(\Sigma K)^d P_{(\Sigma K)^{k-1}}$. Now,   the affirmation follows from \eqref{hartwig m-weak core}. 
\end{proof}

\section{Relation of $m$-weak core inverse with other generalized inverses}

In this section we present necessary and sufficient conditions for which the $m$-weak core inverse inverse coincides  with certain known generalized inverses  by using core-EP decomposition.

\begin{theorem} \label{thm equalities}
Let $A\in \Cnn$ be a matrix with $\text{Ind}(A)=k$ written as in (\ref{core EP decomposition}). Then
\begin{enumerate}[{\rm(a)}]
\item  $A^{\core_m}=A^\dag$ if and only if $S=0$ and $N=0$;
\item $A^{\core_m}=A^d$ if and only if  $\tilde{T}_m P_{N^m}=T^{m-k}\tilde{T}_k$;
\item  $A^{\core_m}=A^{\odagger}$ if and only if $\tilde{T}_m N^m=0$;
\item $A^{\core_m}=A^{d,\dag}$ if and only if $\tilde{T}_m P_{N^m}=T^{m-k}\tilde{T}_k P_N$;
\item $A^{\core_m}=A^{\weak,\dag}$ if and only if $\tilde{T}_m P_{N^m}=T^{m-1}S P_N$;
\item  $A^{\core_m}=A^{\weak_m}$ if and only if $\Nu((N^m)^*)\subseteq \Nu(\tilde{T}_m)$.
\end{enumerate}
\end{theorem}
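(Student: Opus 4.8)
The plan is to use the core-EP decomposition \eqref{core EP decomposition} of $A$ throughout, together with the canonical form \eqref{canonical m-weak core} of $A^{\core_m}$, and to compare the $(1,1)$- and $(1,2)$-blocks of $A^{\core_m}$ with the corresponding blocks of each target inverse. The key observation is that the $(1,1)$-block of $A^{\core_m}$ is always $T^{-1}$, and this $(1,1)$-block coincides with the $(1,1)$-block of $A^\dag$ (when $N=S=0$), $A^d$, $A^{\odagger}$, $A^{d,\dag}$, $A^{\weak,\dag}$ and $A^{\weak_m}$ — each of which has $(1,1)$-block $T^{-1}$ in the core-EP coordinates. Hence each equivalence will reduce to matching the $(1,2)$-block, which for $A^{\core_m}$ is $T^{-(m+1)}\tilde{T}_m P_{N^m}$, together with checking that the $(2,1)$- and $(2,2)$-blocks of the target also vanish. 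So the first step is simply to assemble, in the coordinates of \eqref{core EP decomposition}, the canonical block forms of $A^\dag$, $A^d$, $A^{\odagger}$, $A^{d,\dag}=A^dAA^\dag$, and $A^{\weak,\dag}=A^{\weak}P_A$; several of these are already recorded in the excerpt (e.g. \eqref{core EP representation} for $A^{\odagger}$, and the formula for $A^d$ quoted in the proof of Theorem~\ref{properties 2}(e)), and the others follow from \cite{FeLeTh1, FeLePrTh, MaTh} or short computations.

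For part~(a): $A^\dag$ has nonzero lower blocks unless $N=0$, in which case $A=U\mathrm{diag}(T,0)U^* \oplus$ the $S$-block forces $S=0$ as well for $A$ to be EP; more directly, $A^{\core_m}$ has zero second block-row, so $A^{\core_m}=A^\dag$ forces $A^\dag$ to have zero second block-row, which by \eqref{MP of Am} (with $\ell=1$) happens iff $N=0$ and $\tilde T_1=S=0$, and then both sides equal $U\mathrm{diag}(T^{-1},0)U^*$. For parts~(b)--(f), once the target's lower blocks are seen to vanish automatically (each of $A^d$, $A^{\odagger}$, $A^{d,\dag}$, $A^{\weak,\dag}$, $A^{\weak_m}$ has range inside $\Ra(A^k)$ hence zero second block-row in these coordinates, and $A^d$, $A^{\odagger}$, $A^{\weak_m}$ also have zero $(2,1)$-block), the equivalence is exactly the equality of the $(1,2)$-blocks. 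Thus (b) becomes $T^{-(m+1)}\tilde T_m P_{N^m} = T^{-(k+1)}\tilde T_k$, i.e. $\tilde T_m P_{N^m}=T^{m-k}\tilde T_k$; (c) becomes $T^{-(m+1)}\tilde T_m P_{N^m}=0$, i.e. $\tilde T_m P_{N^m}=0$, and I will need the small lemma $\tilde T_m P_{N^m}=0 \iff \tilde T_m N^m=0$ (one direction is immediate since $P_{N^m}$ fixes $\Ra(N^m)$; for the converse use that $P_{N^m}=N^m(N^m)^\dag$ annihilates precisely $\Nu((N^m)^*)$ and argue on ranges, or alternatively use the identity $\tilde T_{m+1}=T\tilde T_m + SN^m$ together with nilpotency of $N$); (d) becomes matching against the $(1,2)$-block of $A^{d,\dag}=A^dAA^\dag$, which a short computation gives as $T^{m-k}\tilde T_k P_N$ up to the $T^{-(m+1)}$ factor — hence $\tilde T_m P_{N^m}=T^{m-k}\tilde T_k P_N$; (e) against $A^{\weak,\dag}=A^{\weak}P_A$, whose $(1,2)$-block works out to $T^{m-1}SP_N$ (using $A^{\weak}=(A^{\odagger})^2A$ and \eqref{P_Am and Q_Am} with $\ell=1$), giving $\tilde T_m P_{N^m}=T^{m-1}SP_N$; and (f) against $A^{\weak_m}=(A^{\odagger})^{m+1}A^m$, whose $(1,2)$-block is $T^{-(m+1)}\tilde T_m$, so the condition is $\tilde T_m P_{N^m}=\tilde T_m$, which holds iff $\tilde T_m(I-P_{N^m})=0$, i.e. iff $\Ra((I-P_{N^m})^*)=\Nu(P_{N^m})=\Nu((N^m)^*)\subseteq\Nu(\tilde T_m)$.

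The routine but slightly delicate part is computing the $(1,2)$-blocks of $A^{d,\dag}$ and $A^{\weak,\dag}$ in core-EP coordinates — these require multiplying the block forms of $A^d$ (resp. $A^{\weak}$), $A$, and $A^\dag$ (resp. $P_A$), and simplifying using $T$-invertibility, $N^k=0$, and the projector identities \eqref{P_Am and Q_Am}, \eqref{projector}; I expect to cross-check these against \cite[Theorem 2.2]{FeLePrTh} and the known core-EP/DMP formulas so the stated right-hand sides come out cleanly as $T^{m-k}\tilde T_k P_N$ and $T^{m-1}SP_N$. The one genuine subtlety, which I would flag as the main obstacle, is the equivalence in~(c) between $\tilde T_m P_{N^m}=0$ and $\tilde T_m N^m=0$ (and making sure the analogous "drop the projector" steps in (f) are phrased correctly): the clean way is to note $\Ra(P_{N^m})=\Ra(N^m)$ so $\tilde T_m P_{N^m}=0 \iff \tilde T_m N^m=0$ holds because $P_{N^m}$ and $N^m$ have the same column space and $P_{N^m}$ acts as the identity on it — formally, $\tilde T_m N^m = \tilde T_m P_{N^m} N^m$, giving one direction, and $\tilde T_m P_{N^m} = \tilde T_m N^m (N^m)^\dag$, giving the other. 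Once that is in hand, every item is a one-line block comparison, and the theorem follows.
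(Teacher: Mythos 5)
Your proposal is correct and follows essentially the same route as the paper: writing each generalized inverse in core-EP coordinates and reducing every equivalence to a comparison of $(1,2)$-blocks against $T^{-(m+1)}\tilde{T}_m P_{N^m}$, with part (a) handled by forcing the lower block-row of $A^\dag$ to vanish (giving $\Omega_1=0$, then $N=0$, then $S=0$). Your explicit justification of $\tilde{T}_m P_{N^m}=0\iff \tilde{T}_m N^m=0$ via $P_{N^m}=N^m(N^m)^\dag$ and $N^m=P_{N^m}N^m$ is a welcome detail that the paper dismisses as ``clearly equivalent.''
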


\begin{proof} (a) From \eqref{MP of Am} for $m=1$,  the Moore-Penrose inverse of $A$ is
 \begin{equation} \label{MP of A}
A^\dagger = U\left[\begin{array}{cc}
T^*\Delta_1 & -T^*\Delta_1 S N^\dagger\\
\Omega_1^*\Delta_1 & N^\dagger-\Omega_1^*\Delta_1 S N^\dagger
\end{array}\right]U^*.
\end{equation}
From \eqref{canonical m-weak core} and \eqref{MP of A} we have  that  $A^{\core_m}=A^\dag$ if and only if the following conditions simultaneously hold:
\begin{enumerate}[(i)]
\item $T^{-1}= T^* \Delta_1 $,
\item $T^{-(m+1)}\tilde{T}_m P_{N^m}=-T^* \Delta_1 S N^\dag$,
\item $0=\Omega_1^* \Delta_1$,
\item $0=N^\dag-\Omega_1S^* \Delta_1 S N^\dag$.
\end{enumerate}
We will prove that (i)-(iv) hold if and only $S=0$ and $N=0$. In fact, as $\Delta_1$ is nonsingular, from (iii) we obtain $\Omega_1=0$. Thus, (iv) implies   $N^\dag=0$, whence $N=0$. Therefore, as $\Omega_1=(I_{n-t}-Q_n)S=0$ we have $S=0$.  Conversely, if $S=0$ and $N=0$, clearly (i)-(iv) are true. \\
(b) By \cite[Theorem 3.9]{FeLeTh2} the Drazin inverse of $A$ is
\begin{equation} \label{Drazin representation}
A^d = U\left[\begin{array}{cc}
T^{-1} & T^{-(k+1)}\tilde{T}_k \\
0 & 0
\end{array}\right]U^*.
\end{equation}
From \eqref{canonical m-weak core} and \eqref{Drazin representation} we have that   $A^{\core_m}=A^d$ is equivalent to
$T^{-(m+1)}\tilde{T}_m P_{N^m}=T^{-(k+1)}\tilde{T}_k$, whence $\tilde{T}_m P_{N^m}=T^{m-k}\tilde{T}_k$.
\\(c) From \eqref{canonical m-weak core} and \eqref{core EP representation},
$A^{\core_m}=A^{\odagger}$ if and only if $T^{-(m+1)}\tilde{T}_m P_{N^m}=0$  which is clearly equivalent to $\tilde{T}_m N^m=0$.
\\ (d) By \cite[Theorem 3.11]{FeLeTh2} we know that
\begin{equation}\label{DMP representation}
A^{d,\dag} = U\left[\begin{array}{cc}T^{-1} & T^{-(k+1)}\tilde{T}_k P_N \\0 & 0\end{array}\right]U^*.
 \end{equation}
Now, the affirmation follows directly from \eqref{canonical m-weak core} and \eqref{DMP representation}. \\
(e) By \cite[Theorem 3.12]{FeLePrTh} we know that
\begin{equation}\label{WC representation}
A^{\weak} = U\left[\begin{array}{cc}
T^{-1} & T^{-2} S P_N\\
0 & 0
\end{array}\right]U^*.
\end{equation}
In consequence, by comparing the blocks in
 \eqref{canonical m-weak core} and \eqref{WC representation} we arrive at  (e). \\
(f) From \eqref{m-WG}, \eqref{core EP representation} and \eqref{core EP Am}, the $m$-weak group inverse inverse of $A$ can be expressed as
\begin{equation} \label{m-WG representation}
A^{\weak_m} = U\left[\begin{array}{cc}
T^{-1} & T^{-(m+1)}\tilde{T}_m \\
0 & 0
\end{array}\right]U^*.
\end{equation}
Thus,  from \eqref{canonical m-weak core} and \eqref{m-WG representation}  we have that  $A^{\core_m}=A^{\weak_m}$ is equivalent to $T^{-(m+1)}\tilde{T}_m P_{N^m}=T^{-(m+1)}\tilde{T}_m$ which is equivalent to $\tilde{T}_m (I_{n-t}-P_{N^m})=0$ because $T$ is nonsingular. Clearly, $\tilde{T}_m (I_{n-t}-P_{N^m})=0$ if and only if $\Ra(I_{n-t}-P_{N^m})\subseteq \Nu(\tilde{T}_m)$ which in turn is equivalent to  $\Nu((N^m)^*)\subseteq \Nu(\tilde{T}_m)$.
\end{proof}

We conclude this section with some necessary and sufficient conditions under which the $m$-weak core inverse of $A$ coincides with different transformations of  $A$.
Before, recall $A$ is a tripotent matrix if $A^3=A$ while  $A$ is a partial isometry if $AA^*A=A$ (or $A^\dag=A^*$).

\begin{theorem} Let $A\in \Cnn$ with $k=\ind(A)$. Then
 \begin{enumerate}[\rm(a)]
\item $A^{\core_m} =0$ if and only if $A^k=0$.
\item $A^{\core_m} =A$ if and only if $A$ is an EP tripotent matrix.
\item $A^{\core_m}=A^*$ if and only if $A$ is an EP partial isometry.
\item $A^{\core_m}=P_A$ if and only if $A$ is an idempotent matrix.
\end{enumerate}
\end{theorem}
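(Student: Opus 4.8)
The plan is to read off all four equivalences directly from the core-EP canonical form \eqref{canonical m-weak core} of $A^{\core_m}$, comparing it block by block with the core-EP forms of $0$, $A$, $A^*$ and $P_A$. For $A$ written as in \eqref{core EP decomposition} I will use that $A^*=U\begin{bmatrix} T^* & 0\\ S^* & N^*\end{bmatrix}U^*$ and, by \eqref{P_Am and Q_Am} with $\ell=1$, that $P_A=U\begin{bmatrix} I_t & 0\\ 0 & P_N\end{bmatrix}U^*$, together with the standard fact that $A$ is EP precisely when $S=0$ and $N=0$ in its core-EP decomposition. Throughout I assume $k\ge 1$ and invoke \eqref{core EP decomposition}; the case $k=0$ is immediate, since then $A^{\core_m}=A^{-1}$ for the nonsingular $A$ and each stated condition reduces to an obvious identity.

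For part (a) the quickest route bypasses the decomposition: by Theorem \ref{properties 2}(g) we have $\rk(A^{\core_m})=\rk(A^k)$, so $A^{\core_m}=0$ if and only if $\rk(A^k)=0$, i.e.\ $A^k=0$.

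For parts (b)--(d) I equate \eqref{canonical m-weak core} with the appropriate matrix and compare the four blocks. In every case the $(2,2)$-entry forces $N=0$ (for (d) it forces $P_N=0$, hence $N=0$), and once $N=0$ we get $N^m=0$, so $P_{N^m}=0$ and the $(1,2)$-block of \eqref{canonical m-weak core} vanishes automatically; the $(1,1)$- and $(2,1)$-comparisons then pin down $T$ and $S$. Concretely: for (b), $A^{\core_m}=A$ yields $N=0$, $S=0$ and $T^{-1}=T$, i.e.\ $T^2=I_t$, so $A=U\begin{bmatrix} T & 0\\ 0 & 0\end{bmatrix}U^*$ with $T^2=I_t$, which is EP and satisfies $A^3=A$; conversely an EP tripotent matrix has $S=N=0$ and $T^3=T$, whence $T^2=I_t$ and $A^{\core_m}=U\begin{bmatrix} T^{-1} & 0\\ 0 & 0\end{bmatrix}U^*=A$. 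For (c), $A^{\core_m}=A^*$ yields $S^*=0$, $N^*=0$ and $T^{-1}=T^*$, i.e.\ $S=N=0$ and $T$ unitary, so $A=U\begin{bmatrix} T & 0\\ 0 & 0\end{bmatrix}U^*$ is EP with $AA^*A=A$, and the converse is the same computation reversed. For (d), $A^{\core_m}=P_A$ yields $T^{-1}=I_t$ and $P_N=0$, i.e.\ $T=I_t$, $N=0$ with $S$ unconstrained, so $A=U\begin{bmatrix} I_t & S\\ 0 & 0\end{bmatrix}U^*$, which squares to itself; conversely $A^2=A$ forces $\ind(A)\le 1$, hence $N=0$, and $T^2=T$, hence $T=I_t$, so $A^{\core_m}=U\begin{bmatrix} I_t & 0\\ 0 & 0\end{bmatrix}U^*=P_A$.

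The computations themselves are routine; the points I expect to need care are (i) observing that the off-diagonal block of \eqref{canonical m-weak core} is killed automatically once $N=0$, so that in (d) the block $S$ genuinely stays free — which is exactly why the criterion there is ``idempotent'' rather than ``EP idempotent'' — and (ii) recording cleanly the dictionary between the intrinsic properties (EP, tripotent, partial isometry, idempotent) and the corresponding conditions on $T$, $S$, $N$ in the core-EP decomposition, in particular the characterization of EP by $S=0$, $N=0$.
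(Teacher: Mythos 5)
Your proposal is correct and follows essentially the same route as the paper: part (a) via the rank identity $\rk(A^{\core_m})=\rk(A^k)$ from Theorem \ref{properties 2}(g), and parts (b)--(d) by blockwise comparison of the canonical form \eqref{canonical m-weak core} with $A$, $A^*$ and $P_A$ in the core-EP decomposition, with the same key observations (the $(2,2)$-block forces $N=0$, which kills the off-diagonal block of $A^{\core_m}$, and $S$ remains free in (d)). No gaps; the extra remarks on the $k=0$ case and the EP $\Leftrightarrow$ $S=N=0$ dictionary are correct.
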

\begin{proof}
(a) By applying Theorem \ref{properties 2} (g) we have $\ra(A^{\core_m}) = \ra(A^k)=0$, whence $A^k=0$. \\
In order to prove statements (b)-(d), let us consider  $A$ as in \eqref{core EP decomposition}. \\
(b) From \eqref{canonical m-weak core} we have
\begin{eqnarray*}
A^{\core_m} =A  &\Leftrightarrow &  ~~ T^{-1}= T,~~T^{-(m+1)}
\tilde{T}_mP_{N^m}=S~~\text{and}~~N=0;
\\  &\Leftrightarrow & T^2=I_t, ~~ S=0 ~~\text{and}~~ N=0;
\\ &\Leftrightarrow & A ~~\text{is an EP tripotent matrix}.
\end{eqnarray*}
(c) From \eqref{canonical m-weak core} we get
\begin{eqnarray*}
A^{\core_m} =A^*  &\Leftrightarrow &  ~~ T^{-1}= T^*,~~T^{-(m+1)}
\tilde{T}_mP_{N^m}=0,~~0=S^*~~\text{and}~~0=N^*;
\\  &\Leftrightarrow & TT^*=I_t, ~~ S=0 ~~\text{and}~~ N=0;
\\ &\Leftrightarrow & A ~~\text{is an EP partial isometry}.
\end{eqnarray*}
(d) By \eqref{canonical m-weak core} and \eqref{P_Am and Q_Am} for $m=1$, we obtain
 \begin{eqnarray*}
A^{\core_m} =P_A  &\Leftrightarrow &  ~~ T^{-1}= I_t,~~T^{-(m+1)}
\tilde{T}_mP_{N^m}=0,~~\text{and}~~0=P_N;
\\ &\Leftrightarrow &  ~~ T^{-1}= I_t,~~T^{-(m+1)}
\tilde{T}_mP_{N^m}=0,~~\text{and}~~0=N;
\\  &\Leftrightarrow & T=I_t ~~\text{and}~~ N=0;
\\ &\Leftrightarrow & A~~\text{is an idempotent matrix}.
\end{eqnarray*}
\end{proof}

\section*{Declarations} 

\noindent {\bf  Funding} 
 
\noindent This work was supported by the Universidad Nacional de R\'{\i}o Cuarto under Grant PPI 18/C559;
CONICET under Grant PIP 112-202001-00694CO;  CONICET under Grant PIBAA 28720210100658CO; and Universidad Nacional de La Pampa, Facultad de Ingenier\'ia under Grant 135/19.

\noindent {\bf Disclosure statement}
 
\noindent The authors report there are no competing interests to declare.

\end{document}